\newcommand\BZ{{\mathbb Z}}
\newcommand{\col}{\colon}
\newcommand\depth{\operatorname{depth}}
\newcommand\codim{\operatorname{codim}}
\newcommand\ext{\operatorname{Ext}}
\newcommand\tor{\operatorname{Tor}}
\newcommand{\Extb}[4]{\overline{\operatorname{Ext}}{\vphantom E}^{#1}_{#2}(#3,#4)}
\newcommand{\Extv}[4]{\widehat{\operatorname{Ext}}{\vphantom E}^{#1}_{#2}(#3,#4)}
\newcommand\fm{{\mathfrak m}}
\newcommand\fn{{\mathfrak n}}
\newcommand\fg{{\mathfrak g}}
\newcommand\me{{\mathcal{E}}}
\newcommand\ms{{\mathcal{S}}}
\newcommand\mb{{\mathcal{B}}}
\newcommand\mt{{\mathcal{T}}}
\newcommand\ma{{\mathcal{A}}}
\newcommand\mm{{\mathcal{M}}}
\newcommand{\End}{\operatorname{End}}
\newcommand\Hom{\operatorname{Hom}}
\newcommand\Der{\operatorname{Der}}
\newcommand{\hh}[1]{\operatorname{H}(#1)}
\newcommand{\HH}[2]{\operatorname{H}^{#1}(#2)}
\newcommand{\lra}{\longrightarrow}
\newcommand{\ov}{\overline}
\newcommand{\wh}{\widehat}
\newcommand{\xra}{\xrightarrow}
\newtheorem{theorem}{Theorem}[section]
\newtheorem*{theorem*}{Theorem}
\newtheorem*{corollary*}{Corollary}
\newtheorem{proposition}[theorem]{Proposition}
\newtheorem{lemma}[theorem]{Lemma}
\newtheorem{corollary}[theorem]{Corollary}
\theoremstyle{definition}
\newtheorem{example}[theorem]{Example}
\newtheorem*{definition}{Definition}
\theoremstyle{remark}
\newtheorem{remark}[theorem]{Remark}
\newtheorem{chunk}[theorem]{}
\numberwithin{equation}{section}
\begin{document}
\title[A bimodule structure for bounded cohomology]{A bimodule structure for the bounded cohomology of commutative local rings}
\author{Luigi Ferraro}
\maketitle
\begin{abstract}
Stable cohomology is a generalization of Tate cohomology to associative rings, first defined by Pierre Vogel. For a commutative local ring $R$ with residue field $k$, stable cohomology modules $\wh\ext{\vphantom E}^{n}_R\;(k,k)$, defined for $n\in\mathbb{Z}$, have been studied by Avramov and Veliche. Stable cohomology carries a structure of $\mathbb{Z}$-graded $k$-algebra. One of the main goals of this paper is to prove that, for a class of Gorenstein rings, this algebra is a trivial extension of absolute cohomology $\ext_R(k,k)$ and a shift of $\Hom_k(\ext_R(k,k),k)$. We use this information to characterize the rings $R$ for which stable cohomology is graded-commutative. Stable cohomology is connected through an exact sequence to bounded cohomology. We use this connection to understand the algebra structure of $\wh\ext_R(k,k)$ by investigating the structure of bounded cohomology $\ov\ext_R(k,k)$ as a graded $\ext_R(k,k)$-bimodule.

\end{abstract}
\section*{Introduction}
Let $R$ be an associative ring and $(M,N)$ a pair of left $R$-modules, then stable cohomology associates to this pair, groups
\[
\wh\ext{\vphantom E}^{n}_R\;(M,N),\quad n\in\mathbb{Z}
\]
which are all zero if $M$ or $N$  has finite projective dimension. There is a canonical transformation $\iota:\ext_R\rightarrow\wh\ext_R$ of absolute cohomology to stable cohomology, which we use to study the relation between the multiplicative structures of these two theories.

Stable cohomology was introduced in an unpublished work by P. Vogel. The first appearance of stable cohomology in published form was in \cite{goichot}, where Goichot called it Tate-Vogel cohomology. It is a generalization of Tate cohomology for modules over finite group rings. In commutative algebra this cohomology was studied by Avramov and Veliche \cite{AV} under the name stable cohomology, for it brings out its relation to the stabilization of module categories.

We focus on local commutative Noetherian rings $(R,\fm,k)$ that are not regular, since in that case $\wh\ext{\vphantom E}^n_R(-,-)=0$ for every $n$. Under this hypothesis Martsinkovsky proved that $\iota:\ext_R(k,k)\rightarrow\wh\ext_R(k,k)$ is an injective map, see \cite{mart}. Our goal is to understand the algebra structure of $\wh\ext_R(k,k)$. To do this we determine the $\ext_R(k,k)$-bimodule structure of the cokernel of $\iota$, which is the bounded cohomology $\ov\ext_R(k,k)$, up to a shift. The left module structure of this cokernel was already studied in \cite{AV}. We describe the right module structure and use it to determine the structure of $\wh\ext_R(k,k)$ for Gorenstein rings for which $\iota$ is split as a map of $\ext_R(k,k)$-bimodules.


In Section 1 we recall the notions of DG Lie algebra and module, which will be used in Section 3 to construct a bimodule structure on bounded cohomology. In Section 2 we recall the construction of stable and bounded cohomology.

Section 3 is the core of the paper. In this section we define a $\ext_R(N,N)$-$\ext_R(k,k)$-bimodule structure on $\ext_R(k,R)\otimes_k\tor^R(k,N)$. To define this bimodule structure we make extensive use of the fact that the absolute cohomology algebra $\ext_R(k,k)$ is the universal enveloping algebra of a graded Lie algebra. The existence of such a Lie algebra is one of the many parallelisms between local algebra and rational homotopy theory. The main result of this section is that $\ext_R(k,R)\otimes_k\tor^R(k,N)$ is isomorphic as a bimodule to $\ov\ext_R(k,N)$. This answers a question raised by Avramov and Veliche in \cite{AV}.

In Section 4 we define a $\ext_R(k,k)$-$\ext_R(N,N)$-bimodule structure on the $k$-dual of $\tor^R(N,k)$, which makes it isomorphic to $\ext_R(N,k)$ as a bimodule. We then prove that this isomorphism restricts to the same isomorphism of $R^!$-$\ext_R(N,N)$-bimodules, where $R^!$ is the Koszul dual of $R$, that was defined by Iyengar and Herzog in \cite{hi}. We use this result in Section 5, together with the main result of Section 3, to prove that $\wh\ext_R(k,k)$ is isomorphic to a trivial extension algebra of $\ext_R(k,k)$ and a shift of its $k$-dual, provided that $R$ is a Gorenstein ring with $\mathrm{depth}\;\ext_R(k,k)\geq2$. For any non regular ring one has $\mathrm{depth}\;\ext_R(k,k)\geq1$, so the previous condition on the depth of the absolute cohomology algebra is not too restrictive. Avramov and Veliche have also proved, in \cite[Lemma 8.3]{AV}, that any complete intersection of codimension at least 2 satisfies $\mathrm{depth}\;\ext_R(k,k)\geq2$.

In Section 6 we use the structural information on $\wh\ext_R(k,k)$, obtained in Section 5, to characterize the rings $R$ for which the algebra $\wh\ext_R(k,k)$ is graded-commutative.

The homological properties of Golod rings are, in many respects, opposite to those of Gorenstein rings. In Section 5 a lot of information is gathered on the right module structure of bounded cohomology over Gorenstein rings. It is only natural to wonder what one can say about this structure when the ring is Golod. The simplest Golod rings are rings with $\fm^2=0$. This is why we dedicate the last section of the paper to studying the right multiplication of $\ext_R(k,k)$ on $\ov\ext_R(k,k)$ for rings with $\fm^2=0$.

\section{DG Lie Algebras and Modules}
Let $R$ be a commutative ring.
Let $\fg$ be a DG Lie algebra over $R$ with differential $\partial^\fg$, see \cite[Chapter 10]{infinite} for the definition. A DG $R$-module $M$ is a (right) DG Lie $\fg$-module if there exists a map
\[
M\otimes_R\fg\rightarrow M
\]
satisfying the following conditions, for $m\in M$ and $\theta,\xi\in\fg$, where we denote $m\otimes\theta$ by $m\cdot\theta$:\\
1) $\partial^M(m\cdot\theta)=\partial^M(m)\cdot\theta+(-1)^{|m|}m\cdot\partial^\fg(\theta)$, where $\partial^M$ is the differential of $M$,\\
2) $m\cdot[\theta,\xi]=(m\cdot\theta)\cdot\xi-(-1)^{|\theta||\xi|}(m\cdot\xi)\cdot\theta$,\\
3) $m\cdot\theta^{[2]}=(m\cdot\theta)\cdot\theta$, for $\theta\in\fg^{\mathrm{odd}}$.\\
\noindent
The definition of DG left $\fg$-module is similar.

If $M$ is a DG left $\fg$-module we can turn it into a DG right $\fg$-module in the following way
\[
m\cdot\theta:=-(-1)^{|\theta||m|}\theta\cdot m,\quad m\in M,\theta\in\fg,
\]
a routine computation shows that this is indeed an action.

If $M$ and $N$ are DG right $\fg$-modules then $M\otimes_R N$ is a DG right $\fg$-module with action
\[
(m\otimes n)\cdot x:=m\otimes(n\cdot x)+(-1)^{|x||n|}(m\cdot x)\otimes n,\quad m\in M,n\in N,x\in\fg,
\]
similarly for tensor product of left modules.

If $\fg$ is a graded Lie $k$-algebra with $k$ a field, we denote by $U\fg$ its universal enveloping algebra, see \cite[Chapter 10]{infinite} for the definition. A Lie $\fg$-module is just a $U\fg$-module.
\section{Stable and bounded cohomology}
In this section we recall the construction of stable cohomology. Let $R$ be a commutative ring, and let $L$ and $M$ be $R$-modules.
Choose projective resolutions $P$ and $Q$ of $L$ and $M$, respectively. Recall that a homomorphism
$P\to Q$ of degree $n$ is a family $\beta=(\beta_i)_{i\in\BZ}$ of $R$-linear maps $\beta_i\col P_i\to Q_{i+n}$;
that means an element of the $R$-module
\[
{\Hom}_{R}(P,Q)_{n} = \prod_{i\in\BZ}\Hom_{R}(P_{i},Q_{i+n})\,.
  \]
This module is the $n$th component of a complex ${\Hom}_{R}(P,Q)$, with differential

\[
\partial(\beta)=\partial^Q\beta-(-1)^{|\beta|}\beta\partial^P.
\]

A map $\beta\col P\to Q$ with $\beta_{i}=0$ for $i\gg 0$ is called a \emph{bounded map}. The bounded maps form the subcomplex:
\[
\ov{\Hom}_{R}(P,Q)_{n} = \bigoplus_{i\in\BZ}\Hom_{R}(P_{i},Q_{i+n})\quad\text{for}\quad n\in\BZ\,.
\]

We write $\wh{\Hom}_{R}(P,Q)$ for the quotient complex.  It is proved in \cite{AV} that this complex is independent of the choices of $P$ and
$Q$ up to $R$-linear homotopy. By construction there is an exact sequence of DG $\End_R(Q)-\End_R(P)$-bimodules
\begin{equation}
\label{eq:bes}
0\lra \ov{\Hom}_{R}(P,Q)\lra\Hom_{R}(P,Q) \lra \wh{\Hom}_{R}(P,Q)\lra 0\,.
\end{equation}
The \emph{stable cohomology} of the pair $(L,M)$ is the graded $R$-module
$\Extv{}RLM$ with
\[
\Extv nRLM = \HH{n}{\wh{\Hom}_{R}(P,Q)}\quad\text{for each}\quad n\in\BZ\,.
\]
The \emph{bounded cohomology} of the pair $(L,M)$ is the graded $R$-module $\ov{\ext}_R(L,M)$ with
\[
\Extb nRLM = \HH{n}{\ov{\Hom}_{R}(P,Q)}\quad\text{for each}\quad n\in\BZ\,.
\]
The sequence \eqref{eq:bes} defines an exact sequence
\begin{equation}
\label{lesR}
\begin{gathered}
\xymatrixcolsep{2pc}
\xymatrixrowsep{0.5pc}
\xymatrix{
\ov\ext_R(L,M) \ar@{->}[r]^{\eta_R} &\ext_R(L,M) \ar@{->}[r]^{\iota_R} & \wh\ext_R(L,M) \ar@{->}[r]^{\quad\quad\;\;\eth_R} & \\
\Sigma\ov\ext_R(L,M) \ar@{->}[r]^{\Sigma\eta_R} &\Sigma\ext_R(L,M)}
\end{gathered}
\end{equation}
of graded $\ext_R(M,M)$-$\ext_R(L,L)$-bimodules.
We refer to \cite{AV} for a treatment on stable cohomology.
\section{A bimodule structure on the complex of bounded maps}
Let $(R,\fm,k)$ be a commutative local Noetherian ring. In this section we define a right $\ext_R(k,k)$-module structure on $\ext_R(k,R)\otimes_k\tor^R(k,N)$ that makes it isomorphic to $\Extb {}RkN$. This answers a question raised by Avramov and Veliche in \cite{AV}. For the rest of the paper $F$ will denote the acyclic closure of $k$, i.e.~a DG algebra minimial free resolution of $k$ with divided powers, see \cite[6.3]{infinite} for details. We denote by $\Der^\gamma_R(F)$ the subcomplex of $\End_R(F)$ of $\Gamma$-derivations, i.e.~$R$-linear endomorphisms of $F$ satisfying the Leibniz rule and respecting the divided power structure of $F$; see \cite[6.2.2]{infinite}. The complex $\Der^\gamma_R(F)$ is a DG Lie $R$-subalgebra of $\End_R(F)$, where the Lie structure on $\End_R(F)$ is defined as
\[
[\theta,\xi]:=\theta\xi-(-1)^{|\theta||\xi|}\xi\theta,\quad\mathrm{for\;}\theta,\xi\in\End_R(F)
\]
\[
\zeta^{[2]}:=\zeta^2,\quad\zeta\in\End_R(F)^{\mathrm{odd}}.
\]
Let $N$ be an $R$-module and $G$ a free resolution of $N$. We define a structure of DG $\End_R(G)$-$\Der^\gamma_R(F)$-bimodule on $\Hom_R(F,R)\otimes_R(F\otimes_RG)$. For $\alpha\in\End_R(G),\theta\in\Der^\gamma_R(F),\varphi\in\Hom_R(F,R),f\in F,g\in G$, we set the left and right products as follow:
\begin{align*}
&\alpha\cdot(\varphi\otimes f\otimes g):=(-1)^{|\alpha|(|\varphi|+|f|)}\varphi\otimes f\otimes\alpha(g),\\
&(\varphi\otimes f\otimes g)\cdot\theta:=(-1)^{|\theta|(|f|+|g|)}((\varphi\theta)\otimes f\otimes g-\varphi\otimes\theta(f)\otimes g).
\end{align*}
The right action is the tensor product action as defined in Section 1, with right action on $F\otimes_R G$ obtained by changing the canonical left action to a right action as explained in Section 1: for $\theta\in\Der^\gamma_RF, f\in F, g\in G$, the canonical left action is
\[
\theta\cdot(f\otimes g):=\theta(f)\otimes g.
\]
\begin{samepage}
\begin{proposition}
Let $\fg$ be a DG Lie $R$-algebra and $A$ a DG $R$-algebra with a structure of DG right $\fg$-module satisfying
\begin{equation}\label{der}
(ab)\theta=a(b\theta)+(-1)^{|\theta||b|}(a\theta)b,\quad\mathrm{for\;all\;}a,b\in A, \theta\in\fg.
\end{equation}
Let $M$ be a DG right $A$-module that is also a DG right $\fg$-module. Let $N$ be a DG left $A$-module that is also a DG right $\fg$-module. If for all $a\in A, m\in M, n\in N, \theta\in\fg$
\begin{equation}\label{compcond}
(an)\theta=(-1)^{|\theta||n|}(a\theta)n+a(n\theta),\quad\mathrm{and}\quad (ma)\theta=(-1)^{|a||\theta|}(m\theta)a+m(a\theta),
\end{equation}
then the DG right $\fg$-module structure of $M\otimes_RN$ induces a DG right $\fg$-module structure on $M\otimes_AN$.
\end{proposition}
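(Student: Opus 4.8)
The plan is to realize $M\otimes_A N$ as the quotient $(M\otimes_R N)/K$, where $K\subseteq M\otimes_R N$ is the $R$-submodule generated by the elements $ma\otimes n-m\otimes an$ with $a\in A$, $m\in M$, $n\in N$ homogeneous, and then to show that $K$ is stable under the right $\fg$-action on $M\otimes_R N$ supplied by the tensor-product construction of Section 1. A short computation shows that $\partial$ of such a generator is again an $R$-combination of such generators, so $K$ is a subcomplex and $M\otimes_A N$ is a DG $R$-module in the usual way; once $K\cdot\fg\subseteq K$ is known, the action descends to $M\otimes_A N$, and the three defining conditions of a DG right $\fg$-module together with the Leibniz rule relating the action to $\partial$ are inherited because the projection $M\otimes_R N\to M\otimes_A N$ is a surjective degree-zero chain map intertwining the $\fg$-actions.

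Since the action $(M\otimes_R N)\otimes_R\fg\to M\otimes_R N$ is $R$-bilinear, it suffices to verify $(ma\otimes n-m\otimes an)\cdot\theta\in K$ for all homogeneous $a,m,n$ and homogeneous $\theta\in\fg$. Expanding with the tensor-product rule $(x\otimes y)\cdot\theta=x\otimes(y\cdot\theta)+(-1)^{|\theta||y|}(x\cdot\theta)\otimes y$ yields
\[
(ma\otimes n)\cdot\theta=ma\otimes(n\cdot\theta)+(-1)^{|\theta||n|}\big((ma)\cdot\theta\big)\otimes n,\qquad
(m\otimes an)\cdot\theta=m\otimes\big((an)\cdot\theta\big)+(-1)^{|\theta|(|a|+|n|)}(m\cdot\theta)\otimes an.
\]
Rewriting $(ma)\cdot\theta$ and $(an)\cdot\theta$ by the two compatibility identities in \eqref{compcond}, each side becomes a sum of three monomials, one involving $n\cdot\theta$, one involving $a\cdot\theta$, and one involving $m\cdot\theta$.

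It remains to reduce modulo $K$. The key observation is that $\fg$ carries $M$ into $M$, $A$ into $A$ and $N$ into $N$, so after acting by $\theta$ each monomial is again of the shape $m'a'\otimes n'$ or $m'\otimes a'n'$ and one may use the relation $m'a'\otimes n'\equiv m'\otimes a'n'$: concretely $ma\otimes(n\cdot\theta)\equiv m\otimes a(n\cdot\theta)$, $(m\cdot\theta)a\otimes n\equiv(m\cdot\theta)\otimes an$, and $m(a\cdot\theta)\otimes n\equiv m\otimes(a\cdot\theta)n$, the last congruence using that $a\cdot\theta\in A$. After these substitutions the two expansions agree term by term, the only sign to be matched being that on the surviving monomial $(m\cdot\theta)\otimes an$, whose two coefficients $(-1)^{|\theta||n|+|a||\theta|}$ and $(-1)^{|\theta|(|a|+|n|)}$ are equal. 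Hence $(ma\otimes n-m\otimes an)\cdot\theta\in K$, so $K\cdot\fg\subseteq K$ and the $\fg$-action descends to $M\otimes_A N$.

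I do not expect a conceptual obstacle: the content is exactly the sign-disciplined computation above, and the only point needing care is to confirm that each reduction modulo $K$ is legitimate --- that acting by $\theta$ keeps each tensor factor inside the appropriate one of $M$, $A$, $N$ --- and then to bookkeep the Koszul signs. Equivalently, one may argue via the coequalizer presentation $M\otimes_R A\otimes_R N\rightrightarrows M\otimes_R N$ of $M\otimes_A N$, checking with \eqref{compcond} that both structure maps are $\fg$-equivariant for the threefold tensor-product action of Section 1, so that the coequalizer inherits a DG right $\fg$-module structure; this merely repackages the same calculation.
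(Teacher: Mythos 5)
Your proof is correct and follows essentially the same route as the paper: both present $M\otimes_A N$ as a cokernel of $\eta\colon m\otimes a\otimes n\mapsto ma\otimes n-m\otimes an$ and verify, using the tensor-product action and \eqref{compcond}, that the image of $\eta$ is stable under the right $\fg$-action, so the structure descends. The only cosmetic difference is that the paper precedes this with a short sanity check showing that \eqref{der} makes the two ways of expanding $((ab)n)\theta$ via \eqref{compcond} agree (i.e., that the hypotheses are internally consistent); you skip this, which is harmless since the descent computation itself does not invoke \eqref{der}.
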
\end{samepage}
\begin{proof} Condition \eqref{der} is needed to ensure that if $a,b\in A$, $n\in N$ and $\theta\in\fg$ then $((ab)n)\theta=(a(bn))\theta$. In fact
\begin{align*}
((ab)n)\theta &=(-1)^{|\theta||n|}((ab)\theta)n+(ab)(n\theta)\\
              &=(-1)^{|\theta|(|b|+|n|)}((a\theta)b)n+(-1)^{|\theta||n|}(a(b\theta))n+(ab)(n\theta).
\end{align*}
The first equality comes from the first equation in \eqref{compcond} and the second equality follows from \eqref{der}. On the other hand
\begin{align*}
(a(bn))\theta &=(-1)^{|\theta|(|b|+|n|)}(a\theta)(bn)+a((bn)\theta)\\
              &=(-1)^{|\theta|(|b|+|n|)}(a\theta)(bn)+(-1)^{|\theta||n|}a((b\theta)n)+a(b(n\theta)).
\end{align*}
The first equality comes from the first equation in \eqref{compcond} and the second equality follows from \eqref{der}. The two expressions are the same since $N$ is a left $A$-module. Similarly one can prove $(m(ab))\theta=((ma)b)\theta$ with $m\in M$.

Recall that tensor product over $A$ is defined by the exactness of the sequence
\[
M\otimes_RA\otimes_RN\xra{\eta}M\otimes_RN\rightarrow M\otimes_AN\rightarrow0
\]
where $\eta$ is the map
\[
\eta: m\otimes a\otimes n\mapsto ma\otimes n-m\otimes an.
\]
It remains to prove that the image of $\eta$ is a DG right $\fg$-module. Indeed
\begin{align*}
\eta(m\otimes a\otimes n)\theta & = (ma\otimes n)\theta-(m\otimes an)\theta\\
&= (-1)^{|\theta||n|}(ma)\theta\otimes n+ma\otimes n\theta-(-1)^{|\theta|(|a|+|n|)}m\theta\otimes an-m\otimes(an)\theta\\
&=(-1)^{|\theta||n|+|a||\theta|}(m\theta)a\otimes n+(-1)^{|\theta||n|}m(a\theta)\otimes n+\\
&\quad\quad +ma\otimes n\theta-(-1)^{|\theta||a|+|\theta||n|}m\theta\otimes an+\\
&\quad\quad -(-1)^{|\theta||n|}m\otimes(a\theta)n-m\otimes a(n\theta)\\
&= (-1)^{|\theta||n|+|a||\theta|}(m\theta)a\otimes n-(-1)^{|\theta||a|+|\theta||n|}m\theta\otimes an+\\
&\quad\quad+ma\otimes n\theta-m\otimes a(n\theta)+\\
&\quad\quad+(-1)^{|\theta||n|}m(a\theta)\otimes n-(-1)^{|\theta||n|}m\otimes(a\theta)n.\qedhere
\end{align*}
\end{proof}
As a corollary we get
\begin{corollary}
The complex
\[
\Hom_R(F,R)\otimes_F(F\otimes_RG)
\]
is an $\End_R(G)$-$\Der^\gamma_R(F)$-bimodule with structure induced by the bimodule structure of $\Hom_R(F,R)\otimes_R(F\otimes_RG)$.\qed

\end{corollary}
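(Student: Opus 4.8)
The plan is to deduce the Corollary from the Proposition. I would apply the Proposition with $\fg=\Der^\gamma_R(F)$, with $A=F$, with $M=\Hom_R(F,R)$ regarded as a DG right $F$-module, and with $N=F\otimes_RG$ regarded as a DG left $F$-module via left multiplication on the first tensor factor. As right $\fg$-module structures I would take on $A=F$ the right action obtained, as in Section 1, from the canonical left action $\theta\cdot f:=\theta(f)$; on $N=F\otimes_RG$ the right action obtained in the same way from $\theta\cdot(f\otimes g):=\theta(f)\otimes g$; and on $M=\Hom_R(F,R)$ the action $\varphi\cdot\theta:=\varphi\theta$. With these choices, expanding the tensor-product action of Section 1 for $M\otimes_RN$ reproduces exactly the right action on $\Hom_R(F,R)\otimes_R(F\otimes_RG)$ displayed before the Proposition, and $M\otimes_AN=\Hom_R(F,R)\otimes_F(F\otimes_RG)$. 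So it suffices to check the hypotheses of the Proposition.

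First one verifies that $A=F$ satisfies \eqref{der}: this is the Leibniz rule $\theta(fg)=\theta(f)g+(-1)^{|\theta||f|}f\theta(g)$ for $\Gamma$-derivations, transported through the left-to-right conversion of Section 1, and it reduces to a short sign computation. Then one verifies the two identities in \eqref{compcond}. For $N=F\otimes_RG$, writing $a=f'\in F$ and $n=f\otimes g$, the identity $(an)\theta=(-1)^{|\theta||n|}(a\theta)n+a(n\theta)$ unwinds once more to the Leibniz rule for $\theta$ evaluated on $f'f$. For $M=\Hom_R(F,R)$, writing $m=\varphi$ and $a=f'\in F$, the identity $(ma)\theta=(-1)^{|a||\theta|}(m\theta)a+m(a\theta)$ unwinds, after fixing the Koszul sign in the right $F$-action $\varphi\cdot f'$, to the Leibniz rule together with associativity of composition. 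With the hypotheses in hand, the Proposition shows that the right $\fg$-module structure on $\Hom_R(F,R)\otimes_R(F\otimes_RG)$ descends to $\Hom_R(F,R)\otimes_F(F\otimes_RG)$.

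It remains to descend the left $\End_R(G)$-action $\alpha\cdot(\varphi\otimes f\otimes g)=(-1)^{|\alpha|(|\varphi|+|f|)}\varphi\otimes f\otimes\alpha(g)$. Since $\alpha$ only modifies the $G$-factor, it sends the submodule generated by the relations $\varphi f'\otimes f\otimes g-\varphi\otimes f'f\otimes g$ (with $f'\in F$) into itself: the two summands of such a relation have the same total degree, so $\alpha$ scales the whole relation by a single sign and replaces $g$ by $\alpha(g)$. Hence the left action also descends to $\Hom_R(F,R)\otimes_F(F\otimes_RG)$. Consequently $\Hom_R(F,R)\otimes_F(F\otimes_RG)$ is a quotient DG module of $\Hom_R(F,R)\otimes_R(F\otimes_RG)$ for each of the two structures, so all of the DG bimodule axioms are inherited from the larger complex, and the Corollary follows. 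The only genuine work in this argument lies in the sign bookkeeping for \eqref{der} and \eqref{compcond}; the most delicate point is the second identity in \eqref{compcond}, where one must first pin down the correct Koszul sign in the right $F$-module structure on $\Hom_R(F,R)$.
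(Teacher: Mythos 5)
Your proof is correct and follows the route the paper clearly intends: the Corollary is stated with only a \qed precisely because it is meant to be an immediate application of the Proposition with $\fg=\Der^\gamma_R(F)$, $A=F$, $M=\Hom_R(F,R)$, $N=F\otimes_RG$, and the twisted right $\fg$-actions you describe; your verification that expanding the Section 1 tensor-product action reproduces the displayed formula, and that \eqref{der} and \eqref{compcond} reduce to the Leibniz rule for $\Gamma$-derivations, is exactly the content the paper leaves implicit. You also correctly flag that the Proposition only covers the right $\fg$-action, so the descent of the left $\End_R(G)$-action (which touches only the $G$-factor and hence preserves the defining relations of $\otimes_F$) must be, and is, handled separately.
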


We want to point out that $F$ is a DG left $\Der^\gamma(F)$-module by evaluation, so its right structure is
\[
f\cdot\theta:=-(-1)^{|f||\theta|}\theta\cdot f=-(-1)^{|f||\theta|}\theta(f)\quad\quad\quad f\in F, \theta\in\Der^\gamma_R(F).
\]

The $\End_R(G)$-$\Der_R^\gamma$(F)-bimodule structure of $\Hom_R(F,G)$ and $\ov\Hom_R(F,G)$ is given by left
and right composition.
\begin{theorem} \label{omegathm}
The following map is an isomorphism of DG $\End_R(G)$-$\Der^\gamma_R(F)$-bimodules:
\begin{equation}\label{omega}
\omega:\Hom_R(F,R)\otimes_F(F\otimes_R G)\rightarrow\ov\Hom_R(F,G)
\end{equation}
\[
\varphi\otimes x\otimes y\mapsto(f\mapsto(-1)^{|f||y|}\varphi(xf)y).
\]
\end{theorem}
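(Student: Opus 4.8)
The plan is to verify directly that the map $\omega$ is a well-defined isomorphism of complexes, and then to check compatibility with the left $\End_R(G)$-action and the right $\Der^\gamma_R(F)$-action separately. First I would check that $\omega$ is well-defined on the tensor product over $F$, i.e.\ that it kills the image of the relation map $\eta$ from the proposition: one needs $\omega((\varphi\psi)\otimes x\otimes y)=\omega(\varphi\otimes(\psi\cdot x)\otimes y)$ for $\psi\in F$, where $\psi\cdot x$ is multiplication in $F$ and $(\varphi\psi)(z)=\varphi(\psi z)$ in $\Hom_R(F,R)=F^{\vee}$. This is a short sign-bookkeeping computation using associativity of multiplication in $F$. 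Next, $\omega$ is a chain map: this is the computation
\[
\partial^{\ov\Hom}(\omega(\varphi\otimes x\otimes y)) = \omega\bigl(\partial(\varphi\otimes x\otimes y)\bigr),
\]
which unwinds to the Leibniz rule for the differential on $\Hom_R(F,R)\otimes_F(F\otimes_R G)$ coming from $\partial^F$ on $F^\vee$, on $F$, and $\partial^G$ on $G$; again this is routine once the signs are pinned down.

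For bijectivity, the cleanest route is to observe that $\Hom_R(F,R)\otimes_F(F\otimes_R G)\cong \Hom_R(F,R)\otimes_R G$ canonically (collapsing the $F\otimes_F$ factor), and that $\ov\Hom_R(F,G)=\bigoplus_i\Hom_R(F_i,G_{i+n})\cong\bigoplus_i\Hom_R(F_i,R)\otimes_R G_{i+n}$ since each $F_i$ is a finitely generated free $R$-module. Under these identifications $\omega$ becomes, up to the sign $(-1)^{|f||y|}$, the standard evaluation isomorphism $F^\vee\otimes_R G\xrightarrow{\sim}\Hom_R(F,G)$ restricted to the bounded part; the boundedness matches the direct-sum decomposition exactly because each $F_i$ is finite. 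Hence $\omega$ is bijective in each degree.

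Finally I would check the two module structures. Compatibility with the left $\End_R(G)$-action is immediate from the definition $\alpha\cdot(\varphi\otimes f\otimes g)=(-1)^{|\alpha|(|\varphi|+|f|)}\varphi\otimes f\otimes\alpha(g)$: applying $\omega$ and then post-composing with $\alpha$ gives the same thing on both sides, since $\alpha$ only touches the $G$-variable and the sign $(-1)^{|\alpha|(|\varphi|+|f|)}$ is exactly the Koszul sign for moving $\alpha$ past $\varphi\otimes f$. For the right $\Der^\gamma_R(F)$-action, recall the right action was defined as
\[
(\varphi\otimes f\otimes g)\cdot\theta = (-1)^{|\theta|(|f|+|g|)}\bigl((\varphi\theta)\otimes f\otimes g - \varphi\otimes\theta(f)\otimes g\bigr),
\]
and on $\ov\Hom_R(F,G)$ the right action is precomposition $\psi\mapsto (-1)^{|\theta||\psi|}\psi\theta$ (with the appropriate sign). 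One computes $\omega((\varphi\otimes f\otimes g)\cdot\theta)$ evaluated at an element $z\in F$ and matches it with $(-1)^{|\theta|(\cdots)}\omega(\varphi\otimes f\otimes g)(\theta(z))$, using that $\varphi\in\Hom_R(F,R)$ kills nothing special but that $\theta$ is a derivation: $\theta(fz)=\theta(f)z+(-1)^{|\theta||f|}f\theta(z)$, so $(\varphi\theta)(fz)-\varphi(\theta(f)z)=(-1)^{|\theta||f|}\varphi(f\theta(z))$, which is precisely what produces the two terms in the definition of the right action. The derivation identity is the crux here.

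The main obstacle is purely the sign management: every one of these verifications is a Koszul-sign computation, and the subtle point is that $\Hom_R(F,R)$ must be regarded as a \emph{right} $F$-module (via $(\varphi\psi)(z)=\varphi(\psi z)$, or with a sign depending on conventions), while $F$ acts on $F\otimes_R G$ on the left; getting $\omega$ to be $F$-balanced and simultaneously a chain map forces the factor $(-1)^{|f||y|}$ in the formula, and I expect the bulk of the written proof to consist of exhibiting that these choices are consistent across all four compatibilities (differential, left action, right action via $\varphi\theta$, right action via $\theta(f)$) at once.
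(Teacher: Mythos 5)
Your proposal is correct and takes essentially the same route as the paper: bijectivity is obtained by factoring $\omega$ through tensor cancellation $\Hom_R(F,R)\otimes_F(F\otimes_R G)\cong\Hom_R(F,R)\otimes_R G$ followed by the evaluation isomorphism onto $\ov\Hom_R(F,G)$ (which the paper cites from \cite[1.3.3]{AV} and you justify directly from the finite rank of each $F_i$), and the module-compatibility checks are the same Koszul-sign computations, with the derivation identity $\theta(xf)=\theta(x)f+(-1)^{|\theta||x|}x\theta(f)$ as the crux of right linearity, exactly as in the paper. One small point: the right $\Der^\gamma_R(F)$-action on $\ov\Hom_R(F,G)$ used in the paper is plain precomposition $(\psi\cdot\theta)(f)=\psi(\theta(f))$ with no extra sign, so the ``(with the appropriate sign)'' caveat in your description should resolve to the trivial sign.
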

\begin{proof}
The map $\omega$ is bijective since it is the compositions of the following maps
\[
\Hom_R(F,R)\otimes_F(F\otimes_RG)\xra{\cong}\Hom_R(F,R)\otimes_RG\xra{\cong}\ov\Hom_R(F,G).
\]
The first map is tensor cancellation and the second map is bijective by \cite[1.3.3]{AV}.
In the following $\alpha\in\mathrm{End}_R(G);\varphi\in\Hom_R(F,R);f,x\in F;y\in G;\theta\in\Der^\gamma_R(F)$.
We first check left linearity:
\begin{align*}
\omega(\alpha\cdot(\varphi\otimes x\otimes y))(f)&=\omega((-1)^{|\alpha|(|\varphi|+|x|)}\varphi\otimes x\otimes\alpha(y))(f)\\
&=(-1)^{|\alpha|(|\varphi|+|x|)+|f||\alpha(y)|}\varphi(xf)\alpha(y),
\end{align*}
and
\begin{align*}
(\alpha\cdot\omega(\varphi\otimes x\otimes y))(f)&=\alpha((-1)^{|f||y|}\varphi(xf)y)\\
&=(-1)^{|f||y|+|\alpha||\varphi(xf)|}\varphi(xf)\alpha(y).
\end{align*}
An elementary computation shows that the signs coincide.
For right linearity we observe
\begin{align*}
\omega((\varphi\otimes x\otimes y)\cdot\theta)(f)&=\omega((-1)^{|\theta|(|x|+|y|)}((\varphi\theta)\otimes x\otimes y-\varphi\otimes\theta(x)\otimes y))(f)\\
&=(-1)^{|\theta|(|x|+|y|)}(\omega((\varphi\theta)\otimes x\otimes y)(f)-\omega(\varphi\otimes\theta(x)\otimes y))\\
&=(-1)^{|\theta|(|x|+|y|)+|f||y|}(\varphi\theta(xf)y-\varphi(\theta(x)f)y)\\
&=(-1)^{|\theta|(|x|+|y|)+|f||y|}\varphi(\theta(xf)-\theta(x)f)y\\
&=(-1)^{|\theta|(|x|+|y|)+|f||y|+|\theta||x|}\varphi(x\theta(f))y,
\end{align*}
where the last equality holds because $\theta$ is a derivation. On the other hand
\begin{align*}
(\omega(\varphi\otimes x\otimes y)\cdot\theta)(f)&=\omega(\varphi\otimes x\otimes y)(\theta(f))\\
&=(-1)^{|\theta(f)||y|}\varphi(x\theta(f))y,
\end{align*}
an elementary computation shows that the signs coincide.
\end{proof}

Set $\pi(R) = \hh{\Der_R^\gamma(F)}$; it is a graded Lie $k$-algebra. By a structure theorem due to Milnor and Moore \cite{MilnorMoore} in characteristic 0 and to Andr\'{e} in characteristic $p>0$ \cite{Andre} (adjusted by Sj\"{o}din \cite{Sjodin2} for $p=2$), the universal enveloping algebra $U\pi(R)$ is isomorphic to $\ext_R(k,k)$. We recall that, by the discussion at the end of
Section 1, a right module over $\pi(R)$ is the same as a right module over $U\pi(R)$.

\begin{definition}
Let $A$ be a DG $R$-algebra, $C$ a right DG $A$-module and $D$ a left DG $A$-module. Then the following map is called a \emph{K\"{u}nneth map}
\[
\kappa: \hh{C}\otimes_{\hh{A}}\hh{D}\rightarrow\hh{C\otimes_AD}
\]
\[
[c]\otimes[d]\mapsto[c\otimes d],
\]
where $c\in C$ and $d\in D$.

\end{definition}

\begin{theorem}\label{main1}
Let $F$ be the acyclic closure of $k$ and $G$ a minimal free resolution of $N$. Then the K\"{u}nneth map
\[
\kappa:\hh{\Hom_R(F,R)}\otimes_{\hh{F}}\hh{F\otimes_RG}\rightarrow\hh{\Hom_R(F,R)\otimes_F(F\otimes_RG)}
\]
is an isomorphism of $\ext_R(N,N)$-$\ext_R(k,k)$-bimodules.
\end{theorem}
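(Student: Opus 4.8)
The plan is to show that the Künneth map $\kappa$ is an isomorphism of bimodules by first establishing that it is an isomorphism of graded $k$-vector spaces, and then checking separately that it respects the left $\ext_R(N,N)$-action and the right $\ext_R(k,k)$-action. The bijectivity is the analytic heart of the argument and rests on minimality. Since $G$ is a minimal free resolution of $N$ and $F$ is the acyclic closure of $k$, the complex $F\otimes_R G$ has zero differential modulo $\fm$, but more importantly $\hh{F\otimes_R G}=\tor^R(k,N)$ is a $k$-vector space and the DG $\hh{F}=k$-module structure is trivial in the sense that the base ring over which we tensor cohomology is a field. Over a field every module is free, so the Künneth spectral sequence degenerates: concretely, I would invoke the standard Künneth theorem for a complex of free modules $\Hom_R(F,R)$ (free over $R$, hence $F$ is $R$-free and $\Hom_R(F,R)$ is $R$-free, and over the field $k$ after the relevant reductions everything is flat) tensored over $F$ with $F\otimes_R G$. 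The cleanest route is to use tensor cancellation $\Hom_R(F,R)\otimes_F(F\otimes_R G)\cong\Hom_R(F,R)\otimes_R G$ — which was already used in the proof of Theorem~\ref{omegathm} — to replace the left-hand complex by $\Hom_R(F,R)\otimes_R G$, whose homology by the ordinary Künneth theorem (with $G$ a complex of free $R$-modules and $\Hom_R(F,R)$ a complex of free $R$-modules, one of them with free homology over the ground ring after passing to $k$) is $\hh{\Hom_R(F,R)}\otimes_k\hh{G\otimes_R k}$; matching this against the source of $\kappa$ via the same cancellation identifies $\kappa$ with an isomorphism.

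More carefully, I would argue as follows. Because $G$ is minimal, $G\otimes_R k$ has zero differential, so $\hh{F\otimes_R G}\cong\hh{F\otimes_R k}\otimes_k\hh{G\otimes_R k}\cong k\otimes_k(G\otimes_R k)=\tor^R(k,N)$ with its natural grading, and this is a free $\hh{F}=k$-module. Dually $\hh{\Hom_R(F,R)}=\ext_R(k,R)$. Then the Künneth map for $C=\Hom_R(F,R)$, $D=F\otimes_R G$ over $A=F$: since $\hh{D}$ is free over the field $k=\hh{F}$, the general Künneth theorem for DG modules over a DG algebra (see e.g.\ the standard references, or one can reduce to the classical statement by the cancellation above) gives that $\kappa$ is bijective. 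I would spell out the reduction: $\kappa$ fits into a commutative square with the classical Künneth isomorphism $\hh{\Hom_R(F,R)}\otimes_k\hh{G}\xra{\cong}\hh{\Hom_R(F,R)\otimes_R G}$ (valid because $\Hom_R(F,R)$ is a bounded-below complex of free $R$-modules with $R$-projective — indeed $k$-vector-space — homology, or because $G$ is such a complex), the vertical maps being the cancellation isomorphisms on homology induced by the chain-level cancellation $\Hom_R(F,R)\otimes_F(F\otimes_R G)\cong\Hom_R(F,R)\otimes_R G$. Commutativity of this square is a direct check on representative cycles $[\varphi]\otimes[x\otimes y]$, and the conclusion is that $\kappa$ is an isomorphism of graded $k$-vector spaces.

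For the bimodule structure I would proceed formally. The left $\ext_R(N,N)$-action on both sides is induced from the chain-level left $\End_R(G)$-action, which on $\Hom_R(F,R)\otimes_F(F\otimes_R G)$ only touches the $G$-factor; so on homology it is the left action of $\hh{\End_R(G)}=\ext_R(N,N)$ coming through the Künneth-type identification $\hh{F\otimes_R G}=\tor^R(k,N)$, and $\kappa$ is manifestly $\ext_R(N,N)$-linear because the Künneth map is natural in $C$ and hence in particular natural with respect to the endomorphisms of $G$ (or, after the cancellation reduction, because classical Künneth is natural in the $G$-variable). For the right $\ext_R(k,k)$-action: on the source of $\kappa$, i.e.\ on $\hh{\Hom_R(F,R)}\otimes_{\hh{F}}\hh{F\otimes_R G}$, the $\ext_R(k,k)=U\pi(R)$-module structure is the one arising from the right $\pi(R)=\hh{\Der^\gamma_R(F)}$-action on the tensor product of homologies, which in turn comes from the right $\Der^\gamma_R(F)$-actions on $\Hom_R(F,R)$ (by right composition $\varphi\theta$) and on $F\otimes_R G$ (the canonical left evaluation action turned into a right action). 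On the target these are exactly the homology-level traces of the chain-level right $\Der^\gamma_R(F)$-action defined before Theorem~\ref{omegathm}. Since $\kappa$ is induced by the identity chain map $c\otimes d\mapsto c\otimes d$ (same underlying complex on both sides, just regrouped as homology of a tensor product versus tensor product of homologies), it visibly commutes with any chain-level operation, in particular with the $\Der^\gamma_R(F)$-action; passing to homology, $\kappa$ is right $\pi(R)$-linear, hence right $U\pi(R)=\ext_R(k,k)$-linear. Combining, $\kappa$ is an isomorphism of $\ext_R(N,N)$-$\ext_R(k,k)$-bimodules.

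The main obstacle is pinning down precisely why the Künneth map over the DG algebra $F$ is an isomorphism rather than merely an edge map of a spectral sequence; the point to get right is that $\hh{F}=k$ is a field so that there are no higher Tor obstructions, together with making sure the chain-level complex $\Hom_R(F,R)\otimes_F(F\otimes_R G)$ is legitimately identified (as a complex, compatibly with all structures) with $\Hom_R(F,R)\otimes_R G$ so that the classical Künneth theorem applies. Once that reduction is in place, the bimodule compatibilities are formal consequences of naturality of Künneth maps and of the fact that $\kappa$ is induced by a regrouping isomorphism on the chain level.
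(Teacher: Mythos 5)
Your proposed bijectivity argument has a genuine gap, and the "cleanest route" you single out is the one that fails. After tensor cancellation you are left with the complex $\Hom_R(F,R)\otimes_R G$. You then want to invoke the classical K\"unneth theorem over $R$; but the hypotheses of that theorem are not satisfied here. The complex $\Hom_R(F,R)$ is a bounded-above (unbounded-below) complex of free $R$-modules, and such complexes are in general \emph{not} K-flat, so $-\otimes_R G$ does not have good exactness properties on it. More decisively, the naive K\"unneth formula would produce $\hh{\Hom_R(F,R)}\otimes_R\hh{G}=\ext_R(k,R)\otimes_R N\cong\ext_R(k,R)\otimes_k (N/\fm N)$, which captures only $\tor_0^R(k,N)$; the desired answer $\ext_R(k,R)\otimes_k\tor^R(k,N)$ carries all the higher Tor's, and these appear as nonzero $\tor_p^R(\ext_R(k,R),N)$ in the K\"unneth spectral sequence precisely because $\ext_R(k,R)$ is a $k$-vector space and $\pdim_R N$ is typically infinite. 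So the classical K\"unneth theorem does not collapse to an isomorphism, and your square would have the wrong object in the lower-left corner. Your displayed "isomorphism" $\hh{\Hom_R(F,R)}\otimes_k\hh{G}\to\hh{\Hom_R(F,R)\otimes_R G}$ also does not typecheck, since $\hh{G}=N$ is an $R$-module, not a $k$-module.

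Your fallback --- a general K\"unneth/Eilenberg--Moore degeneration over the DG algebra $F$ using that $\hh{F}=k$ is a field and $F\otimes_R G$ is semifree --- is closer in spirit, but you do not address convergence. Both $\hh{\Hom_R(F,R)}=\ext_R(k,R)$ and $\hh{F\otimes_RG}=\tor^R(k,N)$ are unbounded in opposite directions, and the relevant spectral sequence is a two-sided object whose convergence requires care; "no higher Tor over $k$" alone is not a complete argument. This is exactly the obstacle you flag at the end, and it is the part that needs an actual proof. The paper circumvents the issue entirely by a different device: take a minimal injective resolution $I$ of $R$, use that $R\to I$ induces a quasi-isomorphism $\Hom_R(F,R)\to\Hom_R(F,I)$ and that $\Hom_R(F,I)\to\Hom_R(k,I)$ is a quasi-isomorphism, and then use semifreeness of $F\otimes_RG$ over $F$ to push these quasi-isomorphisms through $-\otimes_F(F\otimes_RG)$. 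Since $\Hom_R(k,I)$ is a complex of $k$-vector spaces with zero differential (by minimality of $I$) and $k\otimes_RG$ has zero differential (by minimality of $G$), the K\"unneth map in the corner of the diagram is literally the identity, and naturality of the K\"unneth map propagates the isomorphism back to $\kappa$. This sidesteps all spectral-sequence convergence questions. Your treatment of the left $\ext_R(N,N)$- and right $\ext_R(k,k)$-linearity of $\kappa$ is fine and matches the paper's "straightforward computation," but the bijectivity needs the injective-resolution reduction (or at least a carefully justified convergence argument) to be correct.
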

\begin{proof}
A straightforward computation shows the bilinearity of the map. We prove that it is bijective.
Let $I$ be a minimal injective resolution of $R$. Consider the following diagram:

\[
\begin{tikzpicture}
  \matrix (m) [matrix of math nodes,row sep=3em,column sep=4em,minimum width=2em] {
    \hh{\Hom_R(F,R)}\otimes_{\hh{F}}\hh{F\otimes_RG} & \hh{\Hom_R(F,R)\otimes_F(F\otimes_RG)}\\
    \hh{\Hom_R(F,I)}\otimes_{\hh{F}}\hh{F\otimes_RG} & \hh{\Hom_R(F,I)\otimes_F(F\otimes_RG)}\\
    \hh{\Hom_R(k,I)}\otimes_k\hh{F\otimes_RG} & \hh{\Hom_R(k,I)\otimes_F(F\otimes_RG)}\\
    & \hh{\Hom_R(k,I)\otimes_RG}\\
    \hh{\Hom_R(k,I)}\otimes_k\hh{k\otimes_RG} & \Hom_R(k,I)\otimes_RG\\
    \Hom_R(k,I)\otimes_kk\otimes_RG & \Hom_R(k,I)\otimes_kk\otimes_RG\\};
\path[->] (m-1-1) edge  node[above] {$\kappa$} (m-1-2);
\path[->] (m-1-1) edge  node[left] {$\varphi_1$} (m-2-1);
\path[->] (m-1-2) edge  node[right] {$\varphi_2$} (m-2-2);
\path[->] (m-2-1) edge  (m-2-2);
\path[->] (m-3-1) edge  node[left] {$\psi_1$} (m-2-1);
\path[->] (m-3-1) edge  (m-3-2);
\path[->] (m-3-2) edge  node[right] {$\psi_2$} (m-2-2);
\path[->] (m-3-1) edge  (m-5-1);
\path[->] (m-3-2) edge  (m-4-2);
\path[->] (m-5-1) edge  node[left] {$=$} (m-6-1);
\path[->] (m-4-2) edge  node[right] {$=$} (m-5-2);
\path[->] (m-5-2) edge  node[right] {$=$} (m-6-2);
\path[->] (m-6-1) edge  node[above] {$id$} (m-6-2);
\end{tikzpicture}
\]
where $\varphi_1,\varphi_2$ are induced by the map $R\rightarrow I$ and $\psi_1,\psi_2$ are induced by the map $F\rightarrow k$. The map $\varphi_1$ is an isomorphism since the quasi-isomorphism $R\rightarrow I$ induces a quasi-isomorphism $\Hom_R(F,R)\rightarrow\Hom_R(F,I)$ because of the choice of $F$. Similarly for $\psi_1$.
The maps $\varphi_2,\psi_2$ are isomorphisms because $F\otimes_RG$ is a semifree $F$-module since it is a graded-free module bounded below over a non-negative DG algebra (see \cite{DGHA} for the definition of semifree DG module, see also \cite[Theorem 8.1]{DGHA}).
The bottom map is the identity. The complex $\mathrm{Hom}_R(k,I)\otimes_RG$ has zero differentials because $G$ is minimal and $\mathrm{Hom}_R(k,I)$ is a complex of $k$-vector spaces. The tensor product $\hh{\Hom_R(k,I)}\otimes_k\hh{F\otimes_RG}$ is isomorphic to $\hh{\Hom_R(k,I)}\otimes_k\hh{k\otimes_RG}$ since the tensor product is over a field and $\hh{F\otimes_RG}\cong\hh{k\otimes_RG}$. The last equality on the left follows by the minimality of $I$ and $G$.  The commutativity of the diagram follows by the naturality of the K\"{u}nneth map. This proves that all the horizontal maps are isomorphisms.
\end{proof}
\begin{corollary}\label{isobounded}
The composition
\[
\hh{\omega}\circ\kappa:\ext_R(k,R)\otimes_k\tor^R(k,N)\rightarrow\ov\ext_R(k,N)
\]
where $\kappa$ is the K\"{u}nneth map and $\omega$ is the map in \eqref{omega}, is an isomorphism of $\ext_R(N,N)$-$\ext_R(k,k)$-bimodules.
\end{corollary}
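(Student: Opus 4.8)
The plan is to obtain the statement by simply composing the two isomorphisms already in hand, so the proof is essentially bookkeeping. First I would pass to homology in Theorem~\ref{omegathm}. Since $\omega$ is an isomorphism of DG $\End_R(G)$-$\Der^\gamma_R(F)$-bimodules and the homology functor is lax monoidal, $\hh{\omega}$ is an isomorphism of graded bimodules over the homology algebras $\hh{\End_R(G)}$ and $\hh{\Der^\gamma_R(F)}$. Here $\hh{\End_R(G)}=\hh{\Hom_R(G,G)}=\ext_R(N,N)$ because $G$ is a free resolution of $N$, while $\hh{\Der^\gamma_R(F)}=\pi(R)$ by definition; by the Milnor--Moore/Andr\'e/Sj\"odin structure theorem recalled above, a graded right $\pi(R)$-module is the same thing as a graded right $U\pi(R)\cong\ext_R(k,k)$-module, and under this identification the right composition action on $\hh{\ov\Hom_R(F,G)}$ is the usual right action of $\ext_R(k,k)$ on $\ov\ext_R(k,N)$, the left composition action being the usual Yoneda action of $\ext_R(N,N)$. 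Finally $\hh{\ov\Hom_R(F,G)}=\ov\ext_R(k,N)$ by the definition of bounded cohomology, as $F$ and $G$ are projective resolutions of $k$ and $N$. Thus $\hh{\omega}$ is an isomorphism of $\ext_R(N,N)$-$\ext_R(k,k)$-bimodules
\[
\hh{\omega}\colon\hh{\Hom_R(F,R)\otimes_F(F\otimes_RG)}\xra{\cong}\ov\ext_R(k,N).
\]

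Next I would invoke Theorem~\ref{main1}, which gives that the K\"unneth map
\[
\kappa\colon\hh{\Hom_R(F,R)}\otimes_{\hh F}\hh{F\otimes_RG}\xra{\cong}\hh{\Hom_R(F,R)\otimes_F(F\otimes_RG)}
\]
is an isomorphism of $\ext_R(N,N)$-$\ext_R(k,k)$-bimodules. Its source is identified with $\ext_R(k,R)\otimes_k\tor^R(k,N)$: indeed $\hh{F}=k$ since $F$ is the acyclic closure of $k$, so the tensor product is over $k$; $\hh{\Hom_R(F,R)}=\ext_R(k,R)$ since $F$ is a free resolution of $k$; and $\hh{F\otimes_RG}\cong\hh{k\otimes_RG}=\tor^R(k,N)$, because $F\to k$ is a quasi-isomorphism between bounded below complexes of free modules, $G$ consists of flat modules, and $G$ is a free resolution of $N$.

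The corollary then follows immediately: $\hh{\omega}\circ\kappa$ is a composition of two isomorphisms of $\ext_R(N,N)$-$\ext_R(k,k)$-bimodules, hence is itself such an isomorphism, and by construction it is the map $[\varphi]\otimes[x\otimes y]\mapsto[\,f\mapsto(-1)^{|f||y|}\varphi(xf)y\,]$. There is no serious obstacle here, since all the substantive work has been carried out in Theorems~\ref{omegathm} and~\ref{main1}; the only point that warrants a word of care is the identification of the right $\pi(R)=\hh{\Der^\gamma_R(F)}$-action arising from composition with the standard right $\ext_R(k,k)$-action on $\ov\ext_R(k,N)$, which rests on the isomorphism $U\pi(R)\cong\ext_R(k,k)$ and its compatibility with the natural action on cohomology, as recorded in the discussion preceding the definition of the K\"unneth map.
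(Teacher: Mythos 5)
Your proposal is correct and follows the same route the paper takes (implicitly, as the corollary is stated without a written proof): compose $\hh{\omega}$ from Theorem~\ref{omegathm} with the K\"unneth isomorphism $\kappa$ from Theorem~\ref{main1}, then identify source and target via $\hh{\Hom_R(F,R)}=\ext_R(k,R)$, $\hh F=k$, $\hh{F\otimes_RG}\cong\tor^R(k,N)$, and $\hh{\ov\Hom_R(F,G)}=\ov\ext_R(k,N)$. The additional care you take about matching the $\pi(R)$-action with the $\ext_R(k,k)$-action via $U\pi(R)\cong\ext_R(k,k)$ is exactly the identification the paper relies on in the discussion preceding the K\"unneth map.
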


\section{The dual bimodule structure of $\tor^R(M,k)$}
As before, $F$ is an acyclic closure of $k$ and $G$ a free resolution of an $R$-module $N$. The complex $G\otimes_R F$ is a DG $\End_R G$-$\Der^\gamma_RF$-bimodule with actions
\[
\alpha\cdot(g\otimes f):=\alpha(g)\otimes f\quad\alpha\in\End_RG
\]
\[
(g\otimes f)\cdot\theta:=-(-1)^{|\theta||f|}g\otimes\theta(f)\quad\theta\in\Der^\gamma_RF
\]
where the right action is obtained by twisting the left action.\\
The complex $\Hom_F(G\otimes_RF,F)$ is a right DG $\End_RG$-module with product
\begin{equation}\label{eq:RightProd}
(\psi\cdot\alpha)(g\otimes f):=\psi(\alpha(g)\otimes f),
\end{equation}
with $\psi\in\Hom_F(G\otimes_RF,F),\;\alpha\in\End_RG,g\in G,f\in F$.
It has two structures of left DG $\Der^\gamma_RF$-module, let $\psi\in\Hom_F(G\otimes_RF,F),\theta\in\Der^\gamma_RF,f\in F,g\in G$ and define
\[
(\theta\ast_1\psi)(g\otimes f):=\theta(\psi(g\otimes f))
\]
\[
(\theta\ast_2\psi)(g\otimes f):=-(-1)^{|\theta|(|\psi|+|g|)}\psi(g\otimes\theta(f))
\]
where the second action is obtained by acting on the right on $g\otimes f$.
We combine these two left actions into a third one
\begin{equation}\label{thirdaction}
(\theta\cdot\psi)(g\otimes f):=\theta(\psi(g\otimes f))-(-1)^{|\theta|(|\psi|+|g|)}\psi(g\otimes\theta(f)).
\end{equation}
\begin{theorem} \label{dualaction}
The following map is an isomorphism of DG $\Der^\gamma_RF$-$\End_RG$-bimodules
\[
\chi:\Hom_R(G,F)\rightarrow\Hom_F(G\otimes_RF,F)
\]
\[
\varphi\mapsto(g\otimes f\mapsto\varphi(g)f)
\]
where $\Hom_R(G,F)$ has the canonical bimodule structure and $\Hom_F(G\otimes_RF,F)$ has the bimodule structure given by \eqref{eq:RightProd} and \eqref{thirdaction}.
\end{theorem}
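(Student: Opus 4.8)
The plan is to verify directly that $\chi$ is a bijection of DG modules and then check compatibility with the three module actions in the order: right $\End_R G$-linearity, left $\ast_1$-action, left $\ast_2$-action (from which linearity for the combined action \eqref{thirdaction} follows by additivity). First I would observe that $\chi$ is an isomorphism of the underlying complexes: the map $\varphi\otimes f\mapsto(g\mapsto\varphi(g)f)$ realizes the tensor cancellation $\Hom_R(G,R)\otimes_R F\xrightarrow{\cong}\Hom_R(G,F)$, and on the other side the adjunction/extension-of-scalars isomorphism $\Hom_F(G\otimes_R F,F)\cong\Hom_R(G,\Hom_F(F,F))=\Hom_R(G,F)$ (valid because $G$ is a bounded-below complex of free $R$-modules) identifies $\Hom_F(G\otimes_R F,F)$ with $\Hom_R(G,F)$; the composite of these two is exactly $\chi$. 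Compatibility with differentials is the standard sign bookkeeping for these canonical identifications, so I would state it and move on.

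Next I would check right $\End_R G$-linearity: for $\varphi\in\Hom_R(G,F)$, $\alpha\in\End_R G$, the canonical right action on $\Hom_R(G,F)$ is precomposition, $(\varphi\cdot\alpha)=(-1)^{?}\varphi\alpha$ (with the Koszul sign for composition of maps of complexes), and applying $\chi$ gives $g\otimes f\mapsto\varphi(\alpha(g))f$, which is precisely $(\chi(\varphi)\cdot\alpha)(g\otimes f)$ by \eqref{eq:RightProd}; the signs match because no degree is crossed past $f$. For the $\ast_1$-action I would compute $\chi(\theta\cdot\varphi)$, where $\theta\cdot\varphi$ in the canonical left structure on $\Hom_R(G,F)$ is $\theta$ postcomposed with $\varphi$ (again with the appropriate Koszul sign $(-1)^{|\theta||\varphi|}$ built into the definition of the DG-module structure on a Hom-complex where the target carries the $\Der^\gamma_R F$-action): applying $\chi$ yields $g\otimes f\mapsto\theta(\varphi(g))f$. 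This should be compared with $(\theta\ast_1\chi(\varphi))(g\otimes f)=\theta(\chi(\varphi)(g\otimes f))=\theta(\varphi(g)f)$. These are \emph{not} literally equal — $\theta$ is a $\Gamma$-derivation, so $\theta(\varphi(g)f)=\theta(\varphi(g))f+(-1)^{|\theta||\varphi(g)|}\varphi(g)\theta(f)$ — and this is exactly where the second summand of \eqref{thirdaction} enters: the discrepancy $(-1)^{|\theta||\varphi(g)|}\varphi(g)\theta(f)$ is accounted for (up to sign) by the $\ast_2$-term. So the right statement to prove is that $\chi$ intertwines the canonical $\theta\cdot(-)$ on $\Hom_R(G,F)$ with the combined action \eqref{thirdaction}, and this is cleaner than trying to match $\ast_1$ and $\ast_2$ separately.

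Concretely: for the left action I would take the canonical $\Der^\gamma_R F$-module structure on $\Hom_R(G,F)$ coming from $F$ being a left $\Der^\gamma_R F$-module by evaluation, namely $(\theta\cdot\varphi)(g)=\theta(\varphi(g))$ with a Koszul sign depending on $|\theta|$ and $|g|$, compute $\chi(\theta\cdot\varphi)(g\otimes f)=(-1)^{?}\theta(\varphi(g))f$, then expand $(\theta\cdot\chi(\varphi))(g\otimes f)$ using \eqref{thirdaction}: the first term is $\theta(\varphi(g)f)=\theta(\varphi(g))f+(-1)^{|\theta||\varphi(g)|}\varphi(g)\theta(f)$ and the second term is $-(-1)^{|\theta|(|\varphi|+|g|)}\chi(\varphi)(g\otimes\theta(f))=-(-1)^{|\theta|(|\varphi|+|g|)}\varphi(g)\theta(f)$; the two $\varphi(g)\theta(f)$ contributions must cancel, leaving $\theta(\varphi(g))f$, and then only the sign in front of $\theta(\varphi(g))f$ needs checking against $\chi(\theta\cdot\varphi)$. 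The main obstacle is purely the sign arithmetic — getting the Koszul signs in the canonical DG bimodule structure on $\Hom_R(G,F)$ to agree, after applying $\chi$, with the ad hoc signs chosen in \eqref{eq:RightProd} and \eqref{thirdaction}, in particular tracking how $|\varphi(g)|=|\varphi|+|g|$ interacts with $|\theta|$ — but there is no conceptual difficulty: the cancellation of the cross terms is forced by the Leibniz rule for $\theta$, which is exactly why \eqref{thirdaction} was defined by combining $\ast_1$ and $\ast_2$ in the first place.
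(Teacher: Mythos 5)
Your proposal is correct and follows essentially the same path as the paper: bijectivity via $F\cong\End_F F$ and adjunction, right linearity by direct comparison (trivial, no signs), and for the left action the realization that one should check the combined action \eqref{thirdaction} rather than $\ast_1$ and $\ast_2$ separately, with the cross term from the Leibniz rule for $\theta$ on $\theta(\varphi(g)f)$ cancelling the $\ast_2$ contribution exactly. That cancellation mechanism is the whole content of the paper's computation, and you identified it correctly.

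One small clarification on the sign you left open: the canonical left action of $\Der^\gamma_R F$ on $\Hom_R(G,F)$ is postcomposition with \emph{no} Koszul sign, $(\theta\cdot\varphi)(g)=\theta(\varphi(g))$, because $\theta$ is applied after $\varphi$ and does not pass over any element of nonzero degree; in particular there is no $(-1)^{|\theta||\varphi|}$ or $(-1)^{|\theta||g|}$ factor. With that convention $\chi(\theta\varphi)(g\otimes f)=\theta(\varphi(g))f$ on the nose, which is exactly what remains after the Leibniz cancellation, so your ``$(-1)^{?}$'' is $1$ and the signs close up. Your extra discussion of $\Hom_R(G,R)\otimes_R F$ in the bijectivity step is not needed (the paper goes directly through adjunction), but it is harmless.
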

\begin{proof}
The map $\chi$ is bijective because of the canonical isomorphism $F\rightarrow\End_FF$ and adjunction. We just need to check left and right linearity.\\
We start with right linearity, let $\alpha\in\End_RG,\varphi\in\Hom_R(G,F),f\in F,g\in G$
\[
\chi(\varphi\alpha)(g\otimes f)=\varphi\alpha(g)f
\]
\[
(\chi(\varphi)\alpha)(g\otimes f)=\chi(\varphi)(\alpha\cdot(g\otimes f))=\chi(\varphi)(\alpha(g)\otimes f)=\varphi\alpha(g) f,
\]
this proves $\chi(\varphi\alpha)=\chi(\varphi)\alpha$.\\
To prove left linearity let $\theta\in\Der^\gamma_RF$, then
\[
\chi(\theta\varphi)(g\otimes f)=\theta\varphi(g)f
\]
and
\begin{align*}
(\theta\chi(\varphi))(g\otimes f)&=\theta(\chi(\varphi)(g\otimes f))-(-1)^{|\theta|(|\varphi|+|g|)}\chi(\varphi)(g\otimes\theta(f))\\
&=\theta(\varphi(g)f)-(-1)^{|\theta|(|\varphi|+|g|)}\varphi(g)\theta(f)\\
&=\theta\varphi(g)f+(-1)^{|\theta|(|\varphi|+|g|)}\varphi(g)\theta(f)-(-1)^{|\theta|(|\varphi|+|g|)}\varphi(g)\theta(f)\\
&=\theta\varphi(g)f.
\end{align*}
where the third equality holds as $\theta$ is a derivation. This proves that $\chi(\theta\varphi)=\theta\chi(\varphi)$.
\end{proof}
In the following Corollary we consider $\ext_R(M,k)$ and $\Hom_k(\tor^R(M,k),k)$ as $\ext_R(k,k)$-$\ext_R(M,M)$-bimodules where the bimodule actions are induced in homology by the actions on $\Hom_R(G,F)$ and $\Hom_F(G\otimes_RF,F)$.
\begin{corollary}\label{DualOfTor}
The graded $\ext_R(k,k)$-$\ext_R(M,M)$-bimodules $\ext_R(M,k)$ and\\ \noindent $\Hom_k(\tor^R(M,k),k)$ are isomorphic.
\end{corollary}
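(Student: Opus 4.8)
The plan is to realise the two bimodules in the statement as the homology of the two DG bimodules compared in Theorem~\ref{dualaction} and then to read the isomorphism off from that theorem. Throughout, $F$ denotes the acyclic closure of $k$ and $G$ a free resolution of $M$.

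First I would record why the first bimodule is $\hh{\Hom_R(G,F)}$ equipped with the declared actions. Since the augmentation $F\to k$ is a quasi-isomorphism and $G$ is a bounded-below complex of free modules, post-composition yields a quasi-isomorphism $\Hom_R(G,F)\xra{\ \simeq\ }\Hom_R(G,k)$, so $\hh{\Hom_R(G,F)}=\ext_R(M,k)$. The left action of $\pi(R)=\hh{\Der^\gamma_R F}$ on this homology extends to an action of $\ext_R(k,k)=U\pi(R)$ by the remark following the Milnor--Moore/Andr\'{e} theorem that a module over $\pi(R)$ is the same as a module over $U\pi(R)$, and the right action of $\hh{\End_R G}=\ext_R(M,M)$ is the usual one; this is the bimodule $\ext_R(M,k)$ of the statement.

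The substantive step is to identify $\hh{\Hom_F(G\otimes_R F,F)}$ with $\Hom_k(\tor^R(M,k),k)$. Here $G\otimes_R F$ is a bounded-below complex of free $F$-modules, hence a semifree DG $F$-module, with $\hh{G\otimes_R F}=\tor^R(M,k)$. The augmentation $\epsilon\col F\to k$ is a quasi-isomorphism, and its kernel $\fm_F$ (the augmentation ideal of $F$) is acyclic, because $\hh F=k$ is concentrated in degree $0$; this is immediate from the long exact homology sequence of $0\to\fm_F\to F\to k\to 0$. Since $G\otimes_R F$ is semifree, $\Hom_F(G\otimes_R F,-)$ preserves quasi-isomorphisms, so post-composition with $\epsilon$ gives a quasi-isomorphism
\[
\Hom_F(G\otimes_R F,F)\xra{\ \simeq\ }\Hom_F(G\otimes_R F,k).
\]
By the base-change adjunction along $F\to k$ the target is $\Hom_k\bigl((G\otimes_R F)\otimes_F k,\,k\bigr)\cong\Hom_k(G\otimes_R k,\,k)$, a complex of $k$-vector spaces whose homology is $\Hom_k(\tor^R(M,k),k)$ since $\Hom_k(-,k)$ is exact (if $G$ is chosen minimal it equals $\Hom_k(\tor^R(M,k),k)$ on the nose). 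I would then verify that each of these comparison maps is a morphism of DG $\Der^\gamma_R F$-$\End_R G$-bimodules, so that the induced bimodule structure on $\Hom_k(\tor^R(M,k),k)$ is exactly the one referred to in the statement.

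With both identifications in place the Corollary follows at once: by Theorem~\ref{dualaction} the map $\chi$ is an isomorphism of DG $\Der^\gamma_R F$-$\End_R G$-bimodules, so $\hh\chi$ is an isomorphism $\hh{\Hom_R(G,F)}\cong\hh{\Hom_F(G\otimes_R F,F)}$ of $\ext_R(k,k)$-$\ext_R(M,M)$-bimodules, which is precisely the asserted isomorphism $\ext_R(M,k)\cong\Hom_k(\tor^R(M,k),k)$. I expect the only genuinely delicate point to be the middle step: $G\otimes_R F$ is infinitely generated over $F$, so $\Hom_F(G\otimes_R F,F)$ is a product and not a direct sum — this is exactly where semifreeness is needed — and one must make sure that the comparison quasi-isomorphisms respect both the left $\Der^\gamma_R F$-action and the right $\End_R G$-action, so that no extra bookkeeping is needed to match the named bimodule structures.
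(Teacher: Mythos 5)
Your proposal is correct and follows essentially the same route as the paper: both rely on Theorem~\ref{dualaction} and then identify $\hh{\Hom_F(G\otimes_R F,F)}$ with $\Hom_k(\tor^R(M,k),k)$ by using semifreeness of $G\otimes_R F$ to replace $F$ by $k$ and then observing that $\Hom_k(-,k)$ is exact. The only cosmetic difference is that you pass through the adjunction $\Hom_F(G\otimes_R F,k)\cong\Hom_k(G\otimes_R k,k)$ where the paper writes $\Hom_F(G\otimes_R F,k)=\Hom_k(G\otimes_R F,k)$ directly; your phrasing is the cleaner one, but the argument is the same.
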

\begin{proof}
We only need to prove that
\[
\hh{\Hom_F(G\otimes_RF,F)}\cong\Hom_k(\tor^R(M,k),k).
\]
Since $G\otimes_RF$ is a semifree DG $F$-module, the functor $\Hom_F(G\otimes_RF,-)$ preserves quasi-isomorphisms, hence
\[
\hh{\Hom_F(G\otimes_RF,F)}\cong\hh{\Hom_F(G\otimes_RF,k)}
\]
now it remains to notice that $\Hom_F(G\otimes_RF,k)=\Hom_k(G\otimes_RF,k)$ and
\[
\hh{\Hom_k(G\otimes_RF,k)}=\Hom_k(\hh{G\otimes_RF},k)
\]
since $k$ is a field.
\end{proof}

Let $R^!$ be the Koszul dual of $R$, i.e. the subalgebra of $\ext_R(k,k)$ generated by $\ext^1_R(k,k)$. In \cite[3.3]{hi} it is proved that $\ext_R(M,k)$ and $\Hom_k(\tor^R(M,k),k)$ are isomorphic as right $R^!$-modules where the action on $\ext_R(M,k)$ is the usual left action twisted with the antipode map. We want to prove that the left $\ext_R(k,k)$-action of the previous corollary restricts to the left $R^!$-action of \cite[3.3]{hi} once we turn the modules from right to left using the antipode map.
\begin{proposition}
The left $\ext_R(k,k)$-action on $\Hom_k(\tor^R(M,k),k)$ as defined in \ref{thirdaction} restricts to the left $R^!$-action as defined in \cite[3.3]{hi}.
\end{proposition}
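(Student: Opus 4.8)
The plan is to reduce the proposition to the action of a single degree-one class and to recognize both actions as the ordinary Yoneda action of $R^!$, the only real work being to track the effect of the antipode. Since $R^!$ is by definition generated as a $k$-algebra by $\ext^1_R(k,k)$, and since both the action of \eqref{thirdaction} and the action of \cite[3.3]{hi} are associative and unital, it suffices to compare how an arbitrary class $\zeta\in\ext^1_R(k,k)=\pi^1(R)$ acts. The antipode of $\ext_R(k,k)\cong U\pi(R)$ restricts to a graded anti-automorphism of $R^!$ with $\mathsf{a}^2=\idmap$ and $\mathsf{a}|_{\ext^1_R(k,k)}=-\idmap$. In \cite[3.3]{hi} the module $\ext_R(M,k)$ is made a \emph{right} $R^!$-module by composing the usual left Yoneda action with $\mathsf{a}$; turning that right module back into a \emph{left} $R^!$-module by means of $\mathsf{a}$, as in the statement, therefore composes the two twists into $\mathsf{a}^2=\idmap$. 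Hence ``the left $R^!$-action of \cite[3.3]{hi}'' on $\Hom_k(\tor^R(M,k),k)$ is nothing but the restriction to $R^!$ of the ordinary left Yoneda action of $\ext_R(k,k)$ on $\ext_R(M,k)$, carried over along the isomorphism of \cite[3.3]{hi}.

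Next I would unwind our own action. By Theorem~\ref{dualaction} the map $\chi$ intertwines the action of \eqref{thirdaction} with left composition on $\Hom_R(G,F)$, so, representing $\zeta$ by a cycle $\theta\in\Der^\gamma_R(F)$, the induced operator on $\ext_R(M,k)=\hh{\Hom_R(G,F)}$ is $[\varphi]\mapsto[\theta\varphi]$. Because the Yoneda product of $\ext_R(k,k)$ is computed by composition in $\End_R(F)$ and $\pi(R)\hookrightarrow\ext_R(k,k)$ is the canonical inclusion, this is precisely the Yoneda action of $\zeta$ on $\ext_R(M,k)$. Thus our left $\ext_R(k,k)$-action on $\Hom_k(\tor^R(M,k),k)$, restricted to $R^!$, is the left Yoneda action of $R^!$ on $\ext_R(M,k)$ carried over along the composite of $\hh{\chi}$ with the identifications used in the proof of Corollary~\ref{DualOfTor}. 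Comparing with the previous paragraph, everything then comes down to checking that this composite agrees, up to a sign which is immaterial for the transported module structure, with the isomorphism of \cite[3.3]{hi}. I would verify this by following the chain maps: the isomorphism of \cite[3.3]{hi} is the canonical identification $\ext_R(M,k)=\hh{\Hom_R(G,k)}=\Hom_k(\tor^R(M,k),k)$ available for $G$ minimal, while our composite factors through $\chi\col\varphi\mapsto(g\otimes f\mapsto\varphi(g)f)$; postcomposing with the augmentation $F\to k$ and restricting to $F_0=R\subseteq F$ sends $\varphi$ to $g\mapsto\overline{\varphi(g)}$, that is, to the class of $\varphi$ reduced modulo $\fm$, which is the map of \cite[3.3]{hi}.

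The main obstacle is the sign bookkeeping. One must check that the sign $-(-1)^{|\theta|(|\psi|+|g|)}$ in \eqref{thirdaction}, the sign introduced in passing from the canonical left action to the twisted right action on $G\otimes_RF$ (as in Section~1 and the beginning of Section~4), and the sign $\mathsf{a}|_{\ext^1_R(k,k)}=-\idmap$ from the antipode combine so that the two operators coincide on the nose; since in cohomological degree one the relevant derivations have a fixed parity, this is a finite check, but it has to be carried out carefully. A secondary point is that, should \cite[3.3]{hi} be formulated with a resolution of $k$ other than the acyclic closure $F$ used here, one should insert a comparison quasi-isomorphism between the two models and observe that it is $R^!$-equivariant, so that the identification of the two chain-level isomorphisms above still applies.
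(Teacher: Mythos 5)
Your argument is correct in substance, and it takes a genuinely different route from the paper. The paper proves the proposition by direct computation: it chooses degree-one generators $y_i=[\theta_i]$ with $\theta_i\in\Der^\gamma_R(F)$, lifts a cocycle $\alpha\in\Hom_R(G,k)$ to $\tilde\alpha\in\Hom_R(G,F)$, evaluates $[\theta_i\,\chi(\tilde\alpha)]$ on a homogeneous $\bar z\in\tor^R(M,k)$ with $\partial z=\sum_j r_jf_j$ to obtain $(-1)^{|\alpha|}\alpha(f_i)$ (invoking \cite[2.12]{hi}), and compares this with $(\alpha\cdot y_i)(z)=-\alpha(f_i)$ from the proof of \cite[3.3]{hi} after the right-to-left twist $y_i\cdot\alpha=-(-1)^{|\alpha|}\alpha\cdot y_i$. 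You instead reduce both sides conceptually: the two antipode twists --- the one \cite{hi} uses to make $\ext_R(M,k)$ a right $R^!$-module, and the one used here to turn it back into a left module --- compose to $\mathsf{a}^2=\idmap$, so the left $R^!$-action of the statement is simply the restriction of the ordinary left Yoneda action transported by the isomorphism of \cite[3.3]{hi}; and by Theorem~\ref{dualaction} the action of \eqref{thirdaction} corresponds under $\chi$ to left composition in $\Hom_R(G,F)$, which is again the Yoneda action. The proposition then collapses to the claim that the two identifications $\ext_R(M,k)\cong\Hom_k(\tor^R(M,k),k)$ --- the one through $\chi$ and $\varepsilon$ used in Corollary~\ref{DualOfTor}, and the one of \cite[3.3]{hi} --- coincide, and your chain-level tracing ($\varphi\mapsto(\bar g\mapsto\overline{\varphi(g)})$ with no extra sign) shows they do. This is a cleaner, more structural argument that never has to evaluate the derivation $\theta$ on a lift.

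Two remarks. The phrase ``up to a sign which is immaterial for the transported module structure'' is not safe: a degree-dependent sign discrepancy between the two identifications would change the transported action, so you must (and, via the chain-level computation you sketch, do) show that they agree on the nose --- the hedge should be dropped. Also, the closing paragraph overstates the remaining work: the sign $-(-1)^{|\theta|(|\psi|+|g|)}$ in \eqref{thirdaction} and the sign from twisting the left action on $G\otimes_RF$ are already absorbed into Theorem~\ref{dualaction}, since that is exactly what the verification $\chi(\theta\varphi)=\theta\cdot\chi(\varphi)$ establishes; the only sign left to track is the antipode on $\ext^1_R(k,k)$, and it cancels as you describe.
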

\begin{proof}
Let $e$ be the embedding dimension of $R$ and $y_1,\ldots,y_e$ an algebra basis of $R^!$ as constructed in \cite[2.9]{hi}. We can choose the previous basis so that $y_i=[\theta_i]$ with $\theta_i\in\Der^\gamma_R(F)$, see \cite[10.2.1]{infinite}. We want to understand the action of $y_i$. To make the proof more readable we drop the subscript $i$ for $y_i$ and $\theta_i$.

Let $G$ be a projective resolution of $M$ and $F$ an acyclic closure of $k$. Consider the following commutative diagram where $\varepsilon: F\rightarrow k$ is the augmentation
\begin{equation} \label{commdiag}
\begin{tikzpicture}
  \matrix (m) [matrix of math nodes,row sep=3em,column sep=4em,minimum width=2em] {
   \Hom_R(G,F) & \Hom_F(G\otimes_RF,F)\\
   \Hom_R(G,k) & \Hom_F(G\otimes_RF,k).\\};
\path[->] (m-1-1) edge  node[above]{$\chi$}(m-1-2);
\path[->] (m-2-1) edge  (m-2-2);
\path[->] (m-1-1) edge  node[left]{$\Hom_R(G,\varepsilon)$} (m-2-1);
\path[->] (m-1-2) edge  node[right]{$\Hom_F(G\otimes_RF,\varepsilon)$} (m-2-2);
\end{tikzpicture}
\end{equation}
The vertical maps are quasi-isomorphisms. Take $\alpha\in\Hom_R(F,k)$ of degree $1-n$ and $z\in G$ of degree $n$. Denote by $\bar z$ the image of $z$ in $G\otimes_R k=\tor^R(M,k)$. Denote by $\varphi$ the element $\chi(\tilde \alpha)$ where $\tilde\alpha$ is a lifting of $\alpha$ to $\Hom_R(G,F)$. By Theorem \ref{dualaction} we have
\[
(\theta\varphi)(g\otimes f)=\theta(\tilde\alpha(g))f.
\]
Let $r_1,\ldots, r_e$ be a minimal generating set of $\fm$, and let $\partial(z)=\sum_{j=1}^er_jf_j$ with $f_j\in F_{n-1}$. Now we calculate $[\theta\varphi](\bar z)$ by lifting $\bar z$ to $G\otimes_RF$
\begin{align*}
[\theta\varphi](\bar z) &=\varepsilon((\theta\varphi)(z\otimes1+\cdots))\\
&= \varepsilon\theta(\tilde\alpha(z))\\
&=(-1)^{|\alpha|}\varepsilon\tilde\alpha(f_i)\\
&=(-1)^{|\alpha|}\alpha(f_i)
\end{align*}
where the first equality follows from the commutativity of the diagram \eqref{commdiag}, the second from \ref{dualaction} and degree reasons, the third from \cite[2.12]{hi}, the fourth from the definition of $\tilde\alpha$.

In the proof of \cite[3.3]{hi} it is proved that  $(\alpha\cdot y_i)(z)=-\alpha(f_i)$, twisting this into a left action using $y_i\cdot\alpha:=-(-1)^{|\alpha|}\alpha\cdot y_i$ we get that this left action is the same as the one defined in \ref{thirdaction}.
\end{proof}
\section{Gorenstein rings}
The stable cohomology of a pair of modules is zero if $R$ is regular since every module admits a finite free resolution. From now on we will assume that $R$ is a singular ring (i.e. not regular). If $R$ is Gorenstein, then $\ext_R(k,R)\cong\Sigma^{-d}k$ with $d=\dim R$ where $\Sigma$ is the suspension functor, hence by Corollary \ref{isobounded}
\[
\ov\ext_R(k,N)\cong\Sigma^{-d}\tor^R(k,N)
\]
as $\ext_R(N,N)$-$\ext_R(k,k)$-bimodules. This is because $\Sigma^{-d}k\otimes_k\tor^R(k,N)$ is isomorphic to $\Sigma^{-d}\tor^R(k,N)$ as $\ext_R(N,N)$-$\ext_R(k,k)$-bimodules.

We will use the following notation
\[
\mathcal{S}=\wh\ext_R(k,k),\quad\mathcal{E}=\ext_R(k,k),\quad\mathcal{B}=\ov\ext_R(k,k).
\]
In \cite[5.1.8]{AV} (and \cite[Theorem 6]{mart}) it is proved that the map $\eta_R$ for the pair $(k,k)$ in the sequence \eqref{lesR} is zero, yielding an exact sequence of $\mathcal{E}$-bimodules
\begin{equation} \label{mart}
0\rightarrow\mathcal{E}\xra{\iota}\mathcal{S}\rightarrow\Sigma\mathcal{B}\rightarrow0.
\end{equation}
\begin{definition}
Let $k$ be a field and $\mathcal{A}$ a graded $k$-algebra with $\ma^0=k$ and $\ma^i=0$ for all $i<0$. Let $\mm$ be a graded left $\ma$-module. Set
\[
\Gamma\mm=\bigcup_{i=0}^\infty\{\mu\in\mm\mid\ma^{\geq i}\mu=0\}.
\]
The left torsion $\me$-subbimodule of $\ms$ is
\[
\mt:=\Gamma\ms.
\]
\end{definition}

\begin{lemma} \label{split}
If $\mathcal{S}=\iota(\me)\oplus\mt^\prime$ for some graded $\me$-subbimodule $\mt^\prime$ of $\ms$, then $\mt^\prime=\mt$ and
\[
\mt^\prime\cong\Sigma^{1-d}\tor^R(k,k)
\]
as graded $\me$-bimodules.
\end{lemma}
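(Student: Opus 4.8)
The plan is to exploit the short exact sequence \eqref{mart} of $\me$-bimodules together with the computation of bounded cohomology over Gorenstein rings recorded just above, namely $\mb\cong\Sigma^{-d}\tor^R(k,k)$, which is Corollary \ref{isobounded} for $N=k$. I would first establish the displayed isomorphism and then show $\mt'=\mt$.

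For the isomorphism: the surjection $\pi\colon\ms\to\Sigma\mb$ in \eqref{mart} has kernel $\iota(\me)$, so if $\ms=\iota(\me)\oplus\mt'$ the restriction $\pi|_{\mt'}\colon\mt'\to\Sigma\mb$ is a map of $\me$-bimodules that is injective (because $\mt'\cap\iota(\me)=0$) and surjective (because $\pi$ annihilates $\iota(\me)$), hence an isomorphism; composing with $\Sigma$ applied to $\mb\cong\Sigma^{-d}\tor^R(k,k)$ gives $\mt'\cong\Sigma^{1-d}\tor^R(k,k)$ as graded $\me$-bimodules. Next, $\mt'\subseteq\mt$: by the isomorphism just proved and the vanishing $\tor^R_q(k,k)=0$ for $q<0$, the module $\mt'$ is bounded above in cohomological degree, say $(\mt')^n=0$ for $n>n_0$, so every homogeneous $\mu\in(\mt')^n$ satisfies $\me^{\geq i}\mu=0$ once $i>n_0-n$, i.e.\ $\mu\in\Gamma\ms=\mt$. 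Finally $\mt\subseteq\mt'$: the splitting identifies the quotient $\ms/\mt'$ with $\iota(\me)\cong\me=\ext_R(k,k)$, and the quotient map sends torsion elements to torsion elements, so it carries $\mt=\Gamma\ms$ into $\Gamma(\ms/\mt')\cong\Gamma\me$; since $R$ is singular one has $\depth\ext_R(k,k)\geq1$, equivalently $\Gamma\me=0$, so $\mt$ maps to $0$ and thus $\mt\subseteq\mt'$. Combining the last two inclusions gives $\mt'=\mt$.

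I do not expect a real obstacle here: the content of the lemma is packaged in Corollary \ref{isobounded}, and the rest is elementary manipulation of the split exact sequence \eqref{mart}. The only external fact used is $\depth\ext_R(k,k)\geq1$ for singular $R$ (noted in the introduction), which forces $\me$ to have no nonzero torsion; the one point demanding care is to keep the homological grading of $\tor^R(k,k)$ straight against the cohomological grading of $\ms$, $\me$, $\mb$ when asserting that $\mt'$ is bounded above and when tracking the suspensions.
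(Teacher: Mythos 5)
Your argument is correct and follows the same overall skeleton as the paper's proof: use the split exact sequence \eqref{mart} to get $\mt'\cong\Sigma\mb\cong\Sigma^{1-d}\tor^R(k,k)$, then prove the two inclusions $\mt'\subseteq\mt$ and $\mt\subseteq\mt'$. The differences are in how the two inclusions are justified. For $\mt'\subseteq\mt$, the paper cites \cite[(7.3.2)]{AV}, which states $\mb=\Gamma\mb$, while you deduce torsionness directly from the fact that $\Sigma^{1-d}\tor^R(k,k)$ is bounded above in cohomological degree; both are fine, and your route is more self-contained given that the identification $\mb\cong\Sigma^{-d}\tor^R(k,k)$ is already in hand. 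For $\mt\subseteq\mt'$, the paper cites \cite[(7.3.4)]{AV}, namely $\iota(\me)\cap\mt=0$, and then uses the direct-sum decomposition; you instead pass to the quotient $\ms/\mt'\cong\me$ and observe that $\Gamma\me=0$ because $\depth\me\geq1$ for singular $R$. These two arguments are logically equivalent---$\iota(\me)\cap\Gamma\ms=\iota(\Gamma\me)$, so $\iota(\me)\cap\mt=0$ if and only if $\Gamma\me=0$---and the depth-$\geq1$ fact you invoke is indeed recorded in the introduction, so this is a legitimate variant that trades one citation for another. In short: same strategy, with marginally more elementary justifications at both steps; your caution about keeping the homological grading of $\tor$ straight against the cohomological grading of $\ms,\me,\mb$ is exactly the right thing to watch.
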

\begin{proof}
Our hypothesis and \eqref{mart} implies that $\mt^\prime$ is isomorphic to $\Sigma\mb$ as graded $\me$-bimodules. By \cite[(7.3.2)]{AV} $\mb=\Gamma\mb$, hence the following containments hold
\[
\mt^\prime=\Gamma\mt^\prime\subseteq\Gamma\ms=\mt.
\]
By \cite[(7.3.4)]{AV} one has $\iota(\me)\cap\mt=(0)$, and since $\ms=\iota(\me)\oplus\mt^\prime$ we deduce $\mt\subseteq\mt^\prime$; this gives us
\[
\mt=\mt^\prime. \qedhere
\]
\end{proof}

\begin{definition}
Let $k$ be a field and $\ma$ a graded $k$-algebra with $\ma^0=k$ and $\ma^i=0$ for all $i<0$. Let $\mm$ be a graded left $\ma$-module. The \textit{depth} of $\mm$ over $\ma$ is defined as
\[
\depth_\ma\mm=\inf\{n\in\mathbb{N}\mid\ext^n_\ma(k,\mm)\neq0\}.
\]
\end{definition}

\begin{definition}
Let $A$ be a graded $k$-algebra with $k$ a field. Let $M$ be a graded $A$-bimodule. The trivial extension algebra of $A$ by $M$ is an algebra denoted by $A\ltimes M$ with underlying bimodule $A\oplus M$ and product given by
\[
(a,m)\cdot(b,n):=(ab,an+mb).
\]
\end{definition}

\begin{theorem} \label{trivialext}
If $R$ is a Gorenstein ring with $\depth\me\geq2$, then the stable cohomology algebra is a trivial extension algebra,
\[
\ms\cong\me\ltimes\Sigma^{1-d}\me^\vee.
\]
\end{theorem}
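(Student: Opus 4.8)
The plan is to verify the hypothesis of Lemma~\ref{split}, to reinterpret the resulting decomposition via Corollary~\ref{DualOfTor}, and then to pin down the algebra structure of $\ms$ by proving that the complementary ideal squares to zero.

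First I would show that the exact sequence \eqref{mart} of graded $\me$-bimodules, $0\to\me\xra{\iota}\ms\to\Sigma\mb\to0$, splits in the category of graded $\me$-bimodules. Thinking of $\me$-bimodules as graded left modules over the enveloping algebra $\me\otimes_k\me^{\mathrm{op}}$, the obstruction to splitting is the Yoneda class of \eqref{mart} in $\ext^1_{\me\otimes_k\me^{\mathrm{op}}}(\Sigma\mb,\me)$, so it suffices to see that this group vanishes. Because $R$ is Gorenstein, $\mb\cong\Sigma^{-d}\tor^R(k,k)$ is bounded above in internal degree, and since the bimodule actions are additive in degree, every element of $\Sigma\mb$ generates a finite-dimensional sub-bimodule; thus $\Sigma\mb$ is a directed union of finite-length $\me\otimes_k\me^{\mathrm{op}}$-modules all of whose composition factors are shifts of the trivial bimodule $k$. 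On the other hand $\depth\me\ge2$ — which, via the antipode anti-automorphism of $\me=U\pi(R)$ (see \cite{infinite}), also holds for the right module structure — together with the Grothendieck spectral sequence $\ext^p_\me(k,\ext^q_{\me^{\mathrm{op}}}(k,\me))\Rightarrow\ext^{p+q}_{\me\otimes_k\me^{\mathrm{op}}}(k,\me)$ gives $\ext^i_{\me\otimes_k\me^{\mathrm{op}}}(k,\me)=0$ for $i=0,1$. A dévissage along the composition factors, together with the vanishing of the relevant $\lim^{1}$-term — a consequence of $\Hom_{\me\otimes_k\me^{\mathrm{op}}}(T,\me)=0$ for torsion $T$ — then yields $\ext^1_{\me\otimes_k\me^{\mathrm{op}}}(\Sigma\mb,\me)=0$, so $\ms=\iota(\me)\oplus\mt'$ for some graded $\me$-subbimodule $\mt'$. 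I expect this splitting step to be the main obstacle.

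Granting the splitting, Lemma~\ref{split} identifies $\mt'=\mt=\Gamma\ms$ and gives $\mt\cong\Sigma^{1-d}\tor^R(k,k)$ as graded $\me$-bimodules. Applying Corollary~\ref{DualOfTor} with $M=k$ yields $\me\cong\Hom_k(\tor^R(k,k),k)$ as graded $\me$-bimodules, and, as each graded component of $\tor^R(k,k)$ is a finite-dimensional $k$-vector space, dualizing over $k$ gives $\tor^R(k,k)\cong\Hom_k(\me,k)=\me^\vee$ as graded $\me$-bimodules; hence $\mt\cong\Sigma^{1-d}\me^\vee$. Now $\iota$ is a homomorphism of graded algebras and is injective by Martsinkovsky \cite{mart}, so $\iota(\me)$ is a graded subalgebra of $\ms$ isomorphic to $\me$, and $\ms=\iota(\me)\oplus\mt$ as graded $\me$-bimodules. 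Using associativity in $\ms$ and the identity $a\cdot z=\iota(a)z$ for the bimodule action, one checks that $\mt$ is a two-sided ideal of $\ms$: indeed $\iota(\me)\mt\subseteq\mt$ and $\mt\iota(\me)\subseteq\mt$ because $\mt$ is a subbimodule, while $\mt\mt\subseteq\mt$ because $\iota(\me^{\ge N})x=0$ implies $\iota(b)(xx')=(\iota(b)x)x'=0$ for $b\in\me^{\ge N}$. Therefore the multiplication of $\ms$ has the form $(\iota(a)+x)(\iota(b)+y)=\iota(ab)+(a\cdot y+x\cdot b)+xy$, and it remains only to prove $\mt\mt=0$.

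Finally I would prove $\mt\mt=0$, which then exhibits $\ms$ as the trivial extension $\me\ltimes\mt\cong\me\ltimes\Sigma^{1-d}\me^\vee$. As a left $\me$-module, $\mt\cong\Sigma^{1-d}\me^\vee$ is concentrated in internal degrees $\le d-1$ and satisfies $\me^{\ge1}\mt=\mt$: equivalently $k\otimes_\me\me^\vee=0$, whose $k$-dual is $\Hom_{\me^{\mathrm{op}}}(k,\me)$, the socle of $\me$ for the right structure, which vanishes since $\depth\me\ge1$. I would then show $\mt^a\mt=0$ by descending induction on the internal degree $a$: given homogeneous $t\in\mt^a$ and $t'\in\mt$, write $t'=\sum_i\iota(a_i)u_i$ with $a_i\in\me^{\ge1}$ homogeneous and $u_i\in\mt$, so that $tt'=\sum_i\bigl(t\iota(a_i)\bigr)u_i$ with $t\iota(a_i)\in\mt^{a+|a_i|}$ of internal degree strictly greater than $a$; every summand vanishes — because $\mt^{a+|a_i|}=0$ when $a=d-1$, and by the inductive hypothesis applied to $t\iota(a_i)$ when $a<d-1$. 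Hence $\mt\mt=0$, and the proof is complete.
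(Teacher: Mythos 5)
Your proof is correct and follows the same overall architecture as the paper: split the exact sequence \eqref{mart} as $\me$-bimodules, identify the complement $\mt$ with $\Sigma^{1-d}\tor^R(k,k)$ via Lemma~\ref{split}, replace $\tor^R(k,k)$ by $\me^\vee$ using Corollary~\ref{DualOfTor}, and finally show $\mt\cdot\mt=0$. The difference is that where the paper simply cites Avramov--Veliche for the two delicate facts --- the splitting of \eqref{mart} when $\depth\me\ge 2$ is \cite[7.2(3)]{AV}, and the vanishing $\mt\cdot\mt=0$ is \cite[9.2(3)]{AV} --- you supply self-contained arguments. For the splitting you compute the obstruction in $\ext^1_{\me\otimes_k\me^{\mathrm{op}}}(\Sigma\mb,\me)$ via a change-of-rings spectral sequence, using $\depth\me\ge 2$ on both sides (via the antipode) together with a d\'evissage over the finite-length sub-bimodules of $\Sigma\mb$ and a $\lim^1$ vanishing; this gives an honest proof rather than a reference. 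For $\mt\cdot\mt=0$ you observe that $\mt\cong\Sigma^{1-d}\me^\vee$ is bounded in internal degree and, since $\Hom_{\me^{\mathrm{op}}}(k,\me)=0$ (right depth at least one, $R$ being singular), satisfies $\me^{\ge 1}\mt=\mt$; a descending induction on degree then kills every product, exploiting associativity in $\ms$. Both arguments are sound and a bit more elementary/explicit than pointing at \cite{AV}; the trade-off is that the spectral-sequence plus $\lim^1$ machinery for the splitting is heavier than a citation, whereas your proof of $\mt\cdot\mt=0$ is genuinely short and illuminating. One small point worth making explicit: you need $\mt$ to be a right $\iota(\me)$-submodule of $\ms$ to conclude $t\iota(a_i)\in\mt$; this holds precisely because $\mt'=\mt$ is an $\me$-subbimodule, which is exactly what Lemma~\ref{split} provides.
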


\begin{proof}
By a dual version of Corollary \ref{DualOfTor} it is enough to prove that $\ms\cong\me\ltimes\Sigma^{1-d}\tor^R(k,k)$. By \cite[7.2(3)]{AV} if $\depth\me\geq2$ then $\ms=\iota(\me)\oplus\mt$ as $\me$-bimodules, hence by Lemma \ref{split} $\mt\cong\Sigma^{1-d}\tor^R(k,k)$ as graded $\me$-bimodules. By \cite[9.2(3)]{AV} $\mt\cdot\mt=0$, hence $\ms$ is a trivial extension of $\iota(\me)$ and $\mt$.
\end{proof}
\begin{remark}
If $R$ is a complete intersection then by \cite[8.3]{AV} $\depth\me=\codim R$, hence any complete intersection with $\codim R\geq 2 $ satisfies the hypothesis of Theorem \ref{trivialext}. The structure of $\ms$ for hypersurfaces is already known, see \cite[(10.2.3)]{buch} (see also \cite[8.4]{AV}). In this case, stable cohomology $\wh\ext_R(k,k)$ is a central localization of absolute cohomology $\ext_R(k,k)$.
\end{remark}
\section{Commutativity}
We recall that by \cite{sjodin} the algebra $\ext_R(k,k)$ is
graded-commutative if and only if $\wh R=Q/I$ with $(Q,\fn)$ regular ring, $I$ generated by a regular sequence, and $I\subseteq\fn^3$.\\
In the following $F$ is the acyclic closure of $k$ over $R$ and $G$ a $R$-free resolution of $N$. We compute $\ext_R(N,N)$ using the complex $\mathrm{End}_R(G)$. We compute $\tor^R(k,N)$ using the complex $F\otimes_R G$. The $\ext_R(N,N)$-$\ext_R(k,k)$-bimodule structure of $\tor^R(k,N)$ is defined as before, i.e. let $[\alpha]\in\ext_R(N,N)$ and $[\Sigma_if_i\otimes g_i]\in\tor^R(k,N)$ then
\[
[\alpha]\cdot[\sum_if_i\otimes g_i]=[\sum_i(-1)^{|\alpha||f_i|}f_i\otimes\alpha(g_i)],
\]
and for $[\theta]\in\pi(R)$
\[
[\sum_if_i\otimes g_i]\cdot[\theta]=-[\sum_i(-1)^{|\theta|(|f_i|+|g_i|)}\theta(f_i)\otimes g_i].
\]
The following lemma is well-known:
\begin{lemma}\label{completion}
Let $(R,\fm,k)\rightarrow (R',\fm',k')$ be a local homomorphism such that the $R$-module $R'$ is flat and $R'\otimes_Rk\cong k'$. Let $N$ be a finitely generated $R$-module and let $N'$ be the $R'$-module $R'\otimes_RN$.
There are isomorphisms of algebras
\begin{align*}
&\alpha:R'\otimes_R\ext_R(k,k)\rightarrow\ext_{R'}(k',k')\\
&\beta:R'\otimes_R\ext_R(N,N)\rightarrow\ext_{R'}(N',N').
\end{align*}
The canonical map $\varphi:R'\otimes_R\tor^R(k,N)\rightarrow\tor^{R'}(k',N')$ is bijective and $\beta$-$\alpha$-covariant.\qed
\end{lemma}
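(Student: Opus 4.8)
The plan is to reduce everything to flat base change carried out at the level of free resolutions. First I would fix $F$ to be the acyclic closure of $k$ — so a DG algebra resolution of $k$ by finitely generated free $R$-modules, since $R$ is Noetherian — and $G$ a resolution of $N$ by finitely generated free $R$-modules. Because $R'$ is flat over $R$ and $R'\otimes_R k\cong k'$, the complexes $F':=R'\otimes_R F$ and $G':=R'\otimes_R G$ are free resolutions over $R'$ of $k'$ and $N'$ respectively; $F'$ carries the induced DG algebra structure and $G'$ the induced DG $F'$-module structure. I would also record that for any complex $C$ of $R$-modules the flatness of $R'$ gives a natural isomorphism $\hh{R'\otimes_R C}\cong R'\otimes_R\hh{C}$; this is the device that converts chain-level base-change isomorphisms into the asserted isomorphisms on (co)homology.

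For the algebra map $\alpha$, I would use the canonical chain map $R'\otimes_R\Hom_R(F,F)\to\Hom_{R'}(F',F')$ sending $r'\otimes\varphi$ to $(r'\varphi)\otimes_R R'$. Since each $F_i$ is a finitely generated free $R$-module, this map is bijective in every degree, and it is visibly compatible with differentials and with composition, so it is an isomorphism of DG $R'$-algebras. Passing to cohomology, using the flatness isomorphism above and the fact that $\ext_{R'}(k',k')$ may be computed from the resolution $F'$, yields the isomorphism of graded $R'$-algebras $\alpha\colon R'\otimes_R\ext_R(k,k)\to\ext_{R'}(k',k')$. Running the identical argument with $G$ in place of $F$ (again finitely generated free in each degree) produces $\beta$.

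For the statement about $\tor$, I would compute $\tor^R(k,N)$ as $\hh{F\otimes_R G}$, the model used throughout the paper. The evident isomorphism of complexes $R'\otimes_R(F\otimes_R G)\cong F'\otimes_{R'}G'$, together with the flatness isomorphism, identifies $R'\otimes_R\tor^R(k,N)$ with $\hh{F'\otimes_{R'}G'}=\tor^{R'}(k',N')$; this identification is the canonical map $\varphi$, hence $\varphi$ is bijective. For $\beta$-$\alpha$-covariance I would recall that the left action of $\ext_R(N,N)=\hh{\Hom_R(G,G)}$ and the right action of $\pi(R)=\hh{\Der^\gamma_R(F)}$ on $\hh{F\otimes_R G}$ are induced, respectively, by applying an endomorphism of $G$ to the second tensor factor and by (the twist of) applying a $\Gamma$-derivation of $F$ to the first tensor factor, exactly as displayed just before the lemma. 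Each of these chain-level operations commutes with $-\otimes_R R'$ and corresponds, under the base-change isomorphisms of the previous paragraph, to the analogous operation over $R'$; therefore $\varphi$ intertwines the actions as claimed.

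The only point that genuinely requires attention is the degreewise finiteness of $F$ and $G$, which is what makes $R'\otimes_R\Hom_R(-,-)$ agree with $\Hom_{R'}(R'\otimes_R-,R'\otimes_R-)$; given the Noetherianness of $R$ and the finite generation of $N$ this is automatic, and the remainder is formal from flatness and the naturality of all maps involved — which is precisely why the lemma is well known. I expect no serious obstacle.
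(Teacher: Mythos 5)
Your plan for the map $\varphi$ on Tor and the covariance argument is fine, but the proofs you sketch for $\alpha$ and $\beta$ contain a genuine gap at the crucial step. You claim that, because each $F_i$ is finitely generated free, the canonical chain map $R'\otimes_R\Hom_R(F,F)\to\Hom_{R'}(F',F')$ is ``bijective in every degree.'' This is false in the generality of the lemma. The degree-$n$ component of $\Hom_R(F,F)$ is the infinite product $\prod_{i}\Hom_R(F_i,F_{i+n})$, and for a nonregular ring the acyclic closure $F$ is unbounded, so this product has infinitely many nonzero factors. Tensoring with a flat $R$-module $R'$ commutes with finite products but not with infinite ones unless $R'$ is finitely presented over $R$; for a flat local homomorphism this forces $R'=R$. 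In the intended application $R'=\wh R$, which is flat but not finitely generated over $R$, and then $R'\otimes_R\prod_i R^{c_i}\to\prod_i (R')^{c_i}$ fails to be surjective (the image consists only of sequences contained in a common finitely generated $R$-submodule of $R'$). So you cannot conclude that the chain map is an isomorphism, and you have offered no alternative argument that it is a quasi-isomorphism.

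The paper's (suppressed) proof circumvents exactly this problem. For $\beta$ it does not compare $R'\otimes_R\Hom_R(G,G)$ and $\Hom_{R'}(G',G')$ directly, but instead maps both quasi-isomorphically to $R'\otimes_R\Hom_R(G,N)$ and $\Hom_{R'}(G',N')$, respectively. In those complexes the degree-$n$ component is a single $\Hom_R(G_{-n},N)$, with no infinite product, so flatness and finite generation do give a genuine degreewise isomorphism, and the two-out-of-three property in a commutative square then shows that $R'\otimes_R\Hom_R(G,G)\to\Hom_{R'}(G',G')$ is a quasi-isomorphism (though not an isomorphism of complexes). For $\alpha$ the paper works with the DG Lie algebra $\Der^\gamma_R(F)$ rather than $\End_R(F)$ — consistent with the fact that $\ext_R(k,k)=U\pi(R)$ and the bimodule structures used elsewhere in the paper are defined through the Lie action — and identifies $\Der^\gamma_R(F)$ with $\Hom_F(\mathrm{Diff}^\gamma_RF,F)$, to which the same quasi-isomorphism trick applies. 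To repair your argument you would need to introduce one of these intermediate complexes with finite components in each degree (for instance $\Hom_R(F,k)$ in place of $\Hom_R(F,F)$) and argue at the level of quasi-isomorphisms rather than chain-level isomorphisms.
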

\begin{definition}
Let $A$ be a graded $k$-algebra and $M$ a graded $A$-bimodule. We say that $M$ is symmetric if for every $m\in M$ and $a\in A$
\[
am=(-1)^{|a||m|}ma.
\]
\end{definition}

\begin{theorem} \label{symmetry}
If $R$ is a complete intersection $\wh R=Q/I$ with $I$ generated by a regular sequence, $(Q,\fn)$ regular ring, $I\subseteq\fn^3$, then $\tor^R(k, k)$ is a symmetric $\ext_R(k, k)$-bimodule.
\end{theorem}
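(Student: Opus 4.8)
The plan is to reduce everything to the completion and then to the level of $\Der^\gamma_R(F)$, where the bimodule structure on $\tor^R(k,k)$ was made completely explicit in the previous section. First I would use Lemma~\ref{completion} with $R' = \wh R$: since passing to the completion is faithfully flat with $\wh R\otimes_R k\cong k$, symmetry of $\tor^{R}(k,k)$ over $\ext_R(k,k)$ is equivalent to symmetry of $\tor^{\wh R}(k,k)$ over $\ext_{\wh R}(k,k)$ (the actions are covariant along the isomorphisms $\alpha,\beta$, and the degrees are unchanged). So I may assume $R = Q/I$ with $(Q,\fn)$ regular, $I$ generated by a regular sequence $\bsx = x_1,\dots,x_c$, and $I\subseteq\fn^3$.

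Next I would invoke Sj\"odin's theorem (recalled at the start of Section~6): the hypothesis $I\subseteq\fn^3$ is exactly what makes $\ext_R(k,k)$ graded-commutative, equivalently what makes the homotopy Lie algebra $\pi(R) = \hh{\Der^\gamma_R(F)}$ abelian (all brackets and the squaring operation vanish), equivalently $\pi^{\geq 2}(R) = 0$ so that $\pi(R) = \pi^1(R)\oplus\pi^2(R)$ with $\pi^1(R)$ in degree $1$ (edim many generators, the ``Koszul'' part) and $\pi^2(R)$ in degree $2$ (the $c$ exterior generators coming from the regular sequence). Thus $\ext_R(k,k) = U\pi(R)$ is the tensor product of an exterior algebra on $\pi^1$ and a polynomial algebra on $\pi^2$ — concretely $R^! \otimes_k k[\chi_1,\dots,\chi_c]$ with $|\chi_j| = 2$. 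Since a bimodule over a graded-commutative algebra is symmetric as soon as it is symmetric for a set of algebra generators, it suffices to check the symmetry relation $[\theta]\cdot w = (-1)^{|\theta||w|}\, w\cdot[\theta]$ on $\tor^R(k,k)$ for $[\theta]$ ranging over the degree-$1$ generators $y_1,\dots,y_e$ and the degree-$2$ generators $\chi_1,\dots,\chi_c$.

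The heart of the argument is then a direct computation using the explicit formulas displayed just before Lemma~\ref{completion}: with $G$ a minimal $R$-free resolution of $k$ (I would take $G = F$, the acyclic closure), the left action of $[\alpha]\in\ext_R(k,k) = \hh{\End_R(F)}$ on a class $[\sum f_i\otimes g_i]\in\hh{F\otimes_R F}$ is $[\sum(-1)^{|\alpha||f_i|} f_i\otimes\alpha(g_i)]$, while the right action of $[\theta]\in\pi(R)$ is $-[\sum(-1)^{|\theta|(|f_i|+|g_i|)}\theta(f_i)\otimes g_i]$. For a degree-$1$ class $y = [\theta]$ with $\theta\in\Der^\gamma_R(F)$ I would use the standard correspondence (as in the proof of the previous Proposition, via \cite[10.2.1]{infinite} and \cite[2.12]{hi}) between $\theta$ and multiplication by the corresponding Koszul variable; the derivation property of $\theta$, the fact that $F$ is a DG algebra so that $f_i\otimes g_i$ can be manipulated via the product, and minimality, should collapse the two sides to each other up to the Koszul sign $(-1)^{|y||w|}$. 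For the degree-$2$ generators $\chi_j$ one can represent them by the derivations dual to the exterior variables adjoined in passing from $Q$ to $R$; these act (up to sign) by ``differentiating out'' an exterior generator, and again the computation is symmetric. I expect the main obstacle to be bookkeeping the Koszul signs in these two explicit chain-level computations and verifying that the representing derivations (rather than their homotopy classes) can be chosen so that the identities hold on the nose in $F\otimes_R F$; this is where one genuinely uses $I\subseteq\fn^3$ (without it, brackets of the $\theta_i$ would be nonzero and the chain-level identity would only hold up to a correction term that need not vanish in homology). Once the two generator cases are done, graded-commutativity of $\ext_R(k,k)$ propagates symmetry to all of $\ext_R(k,k)$ by the usual multiplicativity of the actions, completing the proof.
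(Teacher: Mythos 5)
Your overall strategy does match the paper's: reduce to the complete case via Lemma~\ref{completion}, use $I\subseteq\fn^3$ together with Sj\"odin to conclude that $\pi(R)=\pi^1\oplus\pi^2$ is abelian and $\ext_R(k,k)=U\pi(R)$ is graded-commutative, check the symmetry relation for the Lie generators $\xi_1,\ldots,\xi_e$ and $\chi_1,\ldots,\chi_c$ acting on the degree-$1$ and degree-$2$ algebra generators of $\tor^R(k,k)=\hh{F\otimes_R F}$, and then propagate by multiplicativity and graded-commutativity.

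There is, however, a genuine gap in how you envision the crucial computation, and your account of where $I\subseteq\fn^3$ enters that computation is not the right one. You predict that the derivation property, the DG algebra structure of $F$, and minimality will ``collapse the two sides to each other up to the Koszul sign,'' and you attribute the need for $I\subseteq\fn^3$ at this point to avoiding nonzero brackets. That is not what happens. After writing $\wh R=Q/I$ with $I=(f_1,\ldots,f_c)$, $f_i=\sum_{j\leq k}r_{ijk}a_j a_k$, and taking $F$ the Tate acyclic closure, one finds that the cases $\xi_j$ on a degree-$1$ Tor generator and $\chi_t$ on a degree-$2$ Tor generator are symmetric on the nose at the chain level, while the one potentially problematic case, $\xi_t$ acting on a degree-$2$ Tor generator, produces two \emph{different} chain-level expressions on the left and on the right. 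The point is that every term in both expressions carries a coefficient $\bar r_{ijt}$ or $\bar r_{itk}$, and these lie in $\fm$ precisely because $I\subseteq\fn^3$ forces the $r_{ijk}$ into $\fn$; hence both cycles lie in $\fm(F\otimes_R F)$ and die under $\varepsilon\otimes F$, i.e.\ both sides are zero in homology, giving symmetry as $0=0$. If you instead try to make the two sides coincide as chains (as your sketch proposes), you will find they do not, and you will be stuck without the observation that the offending coefficients sit in $\fm$. The bracket-vanishing you cite is what gives graded-commutativity of $\ext_R(k,k)$ and abelianity of $\pi(R)$, which is needed for the reduction to generators; it is not the mechanism that kills the cross-terms in the degree-$1$-on-degree-$2$ computation. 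The paper also supplies an example ($R=k[\![x,y]\!]/(xy)$, so $I\not\subseteq\fn^3$) showing that this is exactly where symmetry fails in general.
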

\begin{proof}
We first investigate how $\pi(R)$ acts on $\tor^R(k, k)$.
By Lemma \ref{completion} we can assume that $R$ is complete and $R=Q/I$ with $\fn=( a_1,\ldots, a_e)$, $I=(f_1,\ldots,f_c)$, $Q$ regular, $f_1,\ldots,f_c$ a $Q$-sequence and $I\subseteq \fn^2$. Write
\[
f_i=\sum_{j\leq k}r_{ijk} a_ja_k.
\]
Let $F$ be the acyclic closure of $k$ over $R$. If $a\in Q$ we denote by $\bar a$ the class of $a$ in $R$. Then by \cite{tate}
\[
F=R\langle x_1,\ldots,x_e,y_1,\ldots,y_c\mid \partial(x_i)=\bar a_i,\quad\partial(y_i)=\sum_{j\leq k}\bar r_{ijk}\bar a_jx_k\rangle
\]
is the acyclic closure of $k$ over $R$.
Since $R$ is a complete intersection, by \cite{sjodin} $\pi(R)$ is generated as a $k$-vector space by
elements $\xi_1,\ldots,\xi_e$ of degree 1 and elements $\chi_1,\ldots,\chi_c$ of degree 2, where $e$ is the
embedding dimension of $R$ and $c$ its codimension. These generators are classes of derivations
of $F$ defined as follows
\begin{alignat*}{3}
&\xi_t(x_i)=\delta_{it} & \quad\mathrm{and}\quad & \xi_t(y_i)=-\sum_{j\leq t}\bar r_{ijt} x_j\\
&\chi_t(x_i)=0 & \quad\mathrm{and}\quad & \chi_t(y_i)=\delta_{it}
\end{alignat*}
The generators (as an algebra) of degree 1 of $\tor^R(k,k)$, which is $\mathrm{H}(F\otimes_RF)$, are the classes of
\[
x_i\otimes1-1\otimes x_i\quad\quad\quad i=1,\ldots,e
\]
and the generators of degree 2 are the classes of
\[
y_i\otimes1+\sum_{j\leq k}\bar r_{ijk}x_k\otimes x_j-\sum_{j\leq k}\bar r_{ijk}\otimes x_kx_j-1\otimes y_i
\]
for $i=1,\ldots,c$.

Now we check how the derivations act on these generators
\[
\xi_j\cdot(x_i\otimes1-1\otimes x_i)=-1\otimes\delta_{ij},
\]
\[
(x_i\otimes 1-1\otimes x_i)\cdot\xi_j=\delta_{ij}\otimes1,
\]
\[
\chi_t\cdot(y_i\otimes1+\sum_{j\leq k}\bar r_{ijk}x_k\otimes x_j-\sum_{j\leq k}\bar r_{ijk}\otimes x_kx_j-1\otimes y_i)=-1\otimes\delta_{ti},
\]
\[
(y_i\otimes1+\sum_{j\leq k}\bar r_{ijk}x_k\otimes x_j-\sum_{j\leq k}\bar r_{ijk}\otimes x_kx_j-1\otimes y_i)\cdot\chi_t=-\delta_{ti}\otimes1,
\]
\begin{align*}
&\xi_t\cdot(y_i\otimes1+\sum_{j\leq k}\bar r_{ijk}x_k\otimes x_j-\sum_{j\leq k}\bar r_{ijk}\otimes x_kx_j-1\otimes y_i)=\\&\quad\quad-\sum_{j\leq t}\bar r_{ijt}x_t\otimes 1-\sum_{j\leq t} \bar r_{ijt}\otimes x_j+\sum_{t\leq k}\bar r_{itk}\otimes x_k+\sum_{j\leq t}\bar r_{ijt}\otimes x_j,
\end{align*}
\[
(y_i\otimes1+\sum_{j\leq k}\bar r_{ijk}x_k\otimes x_j-\sum_{j\leq k}\bar r_{ijk}\otimes x_kx_j-1\otimes y_i)\cdot\xi_t=\sum_{j\leq t}\bar r_{ijt}x_j\otimes 1-\sum_{j\leq t}\bar r_{ijt}\otimes x_t.
\]
So
\[
\xi_j\cdot(x_i\otimes 1-1\otimes x_i)=-(x_i\otimes1-1\otimes x_i)\xi_j,
\]
\begin{align*}
&\chi_t\cdot(y_i\otimes1+\sum_{j\leq k}\bar r_{ijk}x_k\otimes x_j-\sum_{j\leq k}\bar r_{ijk}\otimes x_kx_j-1\otimes y_i)=\\&\quad\quad(y_i\otimes1+\sum_{j\leq k}\bar r_{ijk}x_k\otimes x_j-\sum_{j\leq k}\bar r_{ijk}\otimes x_kx_j-1\otimes y_i)\cdot\chi_t
\end{align*}
therefore only the action of elements of degree 1 on elements of degree 2 might break the
symmetry. The $r_{ijk}$ are in $\fm$ since $I\subseteq\fn^3$, hence applying $\varepsilon\otimes F$ (where $\varepsilon:F\rightarrow k$ is the augmentation) to
\begin{align*}
&\xi_t\cdot(y_i\otimes1+\sum_{j\leq k}\bar r_{ijk}x_k\otimes x_j-\sum_{j\leq k}\bar r_{ijk}\otimes x_kx_j-1\otimes y_i)=\\&\quad\quad-\sum_{j\leq t}\bar r_{ijt}x_t\otimes 1-\sum_{j\leq t} \bar r_{ijt}\otimes x_j+\sum_{t\leq k}\bar r_{itk}\otimes x_k+\sum_{j\leq t}\bar r_{ijt}\otimes x_j,
\end{align*}
and
\[
(y_i\otimes1+\sum_{j\leq k}\bar r_{ijk}x_k\otimes x_j-\sum_{j\leq k}\bar r_{ijk}\otimes x_kx_j-1\otimes y_i)\cdot\xi_t=\sum_{j\leq t}\bar r_{ijt}x_j\otimes 1-\sum_{j\leq t}\bar r_{ijt}\otimes x_t.
\]
yields zero, i.e. the left and right product of the class of a derivation of degree 1 on a cycle of degree 2 is zero in homology. We just proved that the action of $\pi(R)$ is symmetric, but $\ext_R(k,k)\cong U\pi(R)$ and by \cite{sjodin} $\ext_R(k,k)$ is graded-commutative, hence the action of $\ext_R(k,k)$ is symmetric.
\end{proof}
We show that $\tor^R(k,k)$ is not in general symmetric.
\begin{example}
Let $R=k[\![x,y]\!]/(xy)$, and denote by $\bar x, \bar y$ the classes of $x$ and $y$ in $R$. The acyclic closure of $k$ over $R$ is
\[
F=R\langle T_1,T_2,S\mid \partial T_1=\bar x, \partial T_2=\bar y, \partial S=\bar x T_2\rangle.
\]
Consider the cycle
\[
z=S\otimes 1+T_2\otimes T_2-1\otimes T_2T_1-1\otimes S
\]
and the derivation defined by
\[
\xi(T_1)=0,\quad\xi(T_2)=1,\quad\xi(S)=-T_1
\]
then
\begin{align*}
\xi\cdot z&=-1\otimes\xi(T_2T_1)-1\otimes\xi(S)\\
&=-1\otimes(\xi(T_2)T_1-T_2\xi(T_1))+1\otimes T_1\\
&=-1\otimes\xi(T_2)T_1+1\otimes T_1\\
&=0
\end{align*}
but
\begin{align*}
z\cdot\xi &= -\xi(S)\otimes1-\xi(T_2)\otimes T_1\\
&=-T_1\otimes 1-1\otimes T_1\\
&=-(T_1\otimes 1+1\otimes T_1)
\end{align*}
This shows that the action is not symmetric.
\end{example}
Denote by $\pi^1$ and $\fg^1$ $k$-vector spaces of rank $e$ and $\pi^2,\fg^2$ $k$-vector spaces of rank $c$. Fix bases
\[
\pi^1=\langle \xi_1,\ldots, \xi_e\rangle\quad\pi^2=\langle \chi_1,\ldots, \chi_c\rangle
\]
\[
\fg^1=\langle x_1,\ldots, x_e\rangle\quad\fg^2=\langle y_1,\ldots, y_c\rangle.
\]
Denote by $E$ the graded $k$-algebra $E=\bigwedge\pi^1\otimes_k\mathrm{Sym}(\pi^2)$ and by $T$ the $k$-vector space $T=\bigwedge\fg^1\otimes_k\mathrm{Sym}(\fg^2)$, we give $T$ the following graded $E$-bimodule structure
\[
\xi_jx_i=-\delta_{ij},\quad x_i\xi_j=\delta_{ij},
\]
\[
\chi_jy_i=-\delta_{ji},\quad y_i\chi_j=-\delta_{ji},
\]
\[
\xi_j y_i=0,\quad y_i\xi_j=0.
\]
With this notation we prove
\begin{theorem}
The following conditions on a local ring $R$ are equivalent:\\
1) $\wh R\cong Q/I$ with $(Q,\fn)$ regular, $I$ generated by a regular sequence and $I\subseteq \fn^3$,\\
2) the $k$-algebra $\wh\ext_R(k,k)$ is graded-commutative.\\
Let $e$ and $c$ be the embedding dimension and codimension of $R$ respectively. When 1 or 2 hold $\wh\ext_R(k,k)\cong E\ltimes\Sigma^{1-d} T$ if $\codim R\geq2$. If $R$ is an hypersurface then $\wh\ext_R(k,k)\cong\bigwedge \pi^1\otimes_k k[t,t^{-1}]$ with $\deg t=2$.
\end{theorem}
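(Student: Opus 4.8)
The plan is to establish the equivalence $1\Leftrightarrow2$ first and then to extract the two structural isomorphisms in the cases $\codim R\ge2$ and $\codim R=1$, which are exhaustive since $R$ is singular. The implication $2\Rightarrow1$ is immediate: the canonical map $\iota\colon\me\to\ms$ appearing in \eqref{mart} is a homomorphism of graded $k$-algebras, injective by Martsinkovsky's theorem \cite{mart} because $R$ is singular, so $\me=\ext_R(k,k)$ is isomorphic to a graded subalgebra of $\ms$; a graded subalgebra of a graded-commutative algebra is graded-commutative, hence $2$ forces $\ext_R(k,k)$ to be graded-commutative, and Sj\"odin's theorem \cite{sjodin} recalled at the start of this section then gives $1$.

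Assume now $1$ with $\codim R\ge2$. Then $R$ is a complete intersection, hence Gorenstein, and $\depth\me=\codim R\ge2$ by \cite[8.3]{AV}, so the argument in the proof of Theorem~\ref{trivialext} gives an isomorphism of graded $k$-algebras $\ms\cong\me\ltimes\Sigma^{1-d}\tor^R(k,k)$, the bimodule structure on $\tor^R(k,k)$ being the one used in Sections~3 and~6. By Sj\"odin, $\me=U\pi(R)$ with $\pi(R)=\pi^1\oplus\pi^2$ abelian, so $\me$ is the graded-commutative algebra $E=\bigwedge\pi^1\otimes_k\mathrm{Sym}(\pi^2)$; and the computations in the proof of Theorem~\ref{symmetry}, which exhibit generators of $\tor^R(k,k)$ from the Tate resolution and evaluate the left and right products of the $\xi_i$ and $\chi_i$ on them, show that under this identification $\tor^R(k,k)$ is exactly the $E$-bimodule $T$ of the statement. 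Substituting, $\ms\cong E\ltimes\Sigma^{1-d}T$. This algebra is graded-commutative: $E$ is graded-commutative, $\Sigma^{1-d}T$ is a symmetric $E$-bimodule since $T$ is (Theorem~\ref{symmetry}) and the shift preserves symmetry (a sign check with the Koszul rule), and the product of any two elements of $\Sigma^{1-d}T$ vanishes in $\ms$; expanding $(a,m)(b,n)$ and $(b,n)(a,m)$ for homogeneous $a,b\in E$ and $m,n\in\Sigma^{1-d}T$ with these three facts yields graded-commutativity of $\ms$, which is $2$.

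Assume finally $1$ with $R$ a hypersurface, i.e. $\wh R\cong Q/(f)$ with $f$ a regular element of $\fn^3$. By Sj\"odin \cite{sjodin}, $\ext_R(k,k)\cong\bigwedge\pi^1\otimes_k k[t]$ with $t$ central of degree $2$, and by \cite[(10.2.3)]{buch} (see also \cite[8.4]{AV}) the map $\iota$ identifies $\ms$ with the central localization of $\ext_R(k,k)$ inverting $t$, namely $\bigwedge\pi^1\otimes_k k[t,t^{-1}]$ with $\deg t=2$; this is graded-commutative because $\bigwedge\pi^1$ is and $t$ is central of even degree, which proves $2$ and the final assertion. The step I expect to be the main obstacle is the identification of $\tor^R(k,k)$ with the abstract bimodule $T$: one must match the explicit cycles generating $\tor^R(k,k)$ over the acyclic closure with the chosen basis of $T$ and carry the Koszul signs correctly through both the left and the right actions defined at the start of this section, and likewise verify that $\Sigma^{1-d}$ preserves symmetry. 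Once that is in place, the graded-commutativity of the trivial extension and both implications are formal.
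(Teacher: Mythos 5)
Your proof is correct and follows the same route as the paper: injectivity of $\iota$ plus Sj\"odin for $2\Rightarrow1$, and Theorem~\ref{trivialext} combined with Sj\"odin's identification $\me\cong E$ and the computations in Theorem~\ref{symmetry} identifying $\tor^R(k,k)$ with the symmetric $E$-bimodule $T$ for $1\Rightarrow2$ when $\codim R\geq2$, with Buchweitz handling the hypersurface case. The only difference is that you spell out why the trivial extension $E\ltimes\Sigma^{1-d}T$ is graded-commutative (symmetry of the shifted bimodule and vanishing of products inside it), a point the paper dismisses as ``clear,'' and you correctly retain the shift $\Sigma^{1-d}$ that the paper's proof text drops in passing.
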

\begin{proof}
$2)\Rightarrow 1)$ If $\wh\ext_R(k,k)$ is graded-commutative then so is $\ext_R(k,k)$ since it is a subalgebra (see \cite{mart}), and by \cite{sjodin} the ring $R$ has the desired form.\\
$1)\Rightarrow 2)$ if $\codim R\geq2$, then by \ref{trivialext}
\[
\wh\ext_R(k,k)\cong\ext_R(k,k)\ltimes\Sigma^{1-d}\tor^R(k,k)
\]
but by \cite{sjodin} if $R$ has the required form then $\ext_R(k,k)\cong E$, and by \ref{symmetry}
\[
\tor^R(k,k)\cong T.
\]
If the ring is an hypersurface, then by \cite{buch} $\wh\ext_R(k,k)$ has the desired form.\\
The algebras $E\ltimes T$ and $\bigwedge \pi^1\otimes_k k[t,t^{-1}]$ are clearly graded-commutative.\qedhere
\end{proof}

\section{Rings with $\fm^2=0$}
In this section we give a multiplication table for the right $\ext_R(k,k)$-module structure of $\ov\ext_R(k,k)\cong\ext_R(k,k)\otimes_k\tor^R(k,k)$ for local Noetherian rings of embedding dimension $e$ with $\fm^2=0$ (not necessarily Gorenstein). We start by noticing that for rings with $\fm^2=0$ the algebra $\ext_R(k,k)$ is a tensor algebra over $\ext_R^1(k,k)$ which we assume being generated by elements $y_1,\ldots,y_e$. It suffices to describe the action of the $y_i$'s.

First we describe the right module structure of $\ext_R(k,R)$. The exact sequence
\[
0\rightarrow \fm\rightarrow R\rightarrow k\rightarrow0
\]
induces an exact sequence of right $\ext_R(k,k)$-modules
\begin{equation} \label{presentation}
0\rightarrow\Sigma\ext_R(k,k)\xra{\eth}\ext_R(k,k)^e\rightarrow\ext_R(k,R)\rightarrow0
\end{equation}
since $\fm\cong k^e$. Computing the connecting homomorphism shows that
\[
\eth=\begin{pmatrix} y_1 \\
\vdots\\
y_e
\end{pmatrix}
\]
in particular \ref{presentation} is a minimal free resolution of $\ext_R(k,R)$, showing that this module is minimally generated by $e$ elements. Considering this module as a quotient of $\ext_R(k,k)^e$ we can denote its minimal generators as
\[
\begin{pmatrix} \bar1 \\
0\\
\vdots\\
0
\end{pmatrix}=v_1,
\cdots,
\begin{pmatrix} 0\\
\vdots \\
0\\
\bar1
\end{pmatrix}=v_e.
\]
Now we want to describe a minimal free resolution of $k$ over $R$. If $\fm=(x_1,\ldots,x_e)$ then we denote by $\delta: R^e\rightarrow R$ the composition of the map $R^e\rightarrow\fm$ given by $e_i\mapsto x_i$, where the $e_i$'s form a basis of $R^e$, with the inclusion $\fm\rightarrow R$. We denote $R^e$ by $U$. We denote by $F_{i+1}$ the module $U^{\otimes(i+1)}$, i.e. the $i+1$th tensor power of $U$. The map $\partial$ is the map
\[
F_{i+1}=U^{\otimes(i+1)}=U\otimes_RU^{\otimes i}\xra{\delta\otimes_R U^{\otimes i}} R\otimes U^{\otimes i}=U^{\otimes i}=F_i.
\]
The complex $(F,\partial)$ is a minimal free resolution of $k$, see \cite[Lemma 10.5]{AV}.

Consider an element $z\in F_{w}$, without loss of generality we can assume $z=u_1\otimes\cdots \otimes u_{w}$ with $u_1,\ldots,u_w\in U$. We denote by $\bar z$ the corresponding element in $k\otimes_RF=\tor^R(k,k)$. By \cite[2.11]{hi} and the last proof of the previous section we know that if $\partial(z)=\Sigma x_jf_j$ then
\[
\bar z\cdot y_i=-(-1)^{|z|}\bar f_i.
\]
By construction $z=u_1\otimes\cdots\otimes u_w$ and we can write $u_1$ as $\Sigma r_ie_i$. By the definition of $\partial$ we get
\[
\partial(z)=\delta(u_1)\otimes u_2\otimes\cdots u_w=\Sigma r_ix_i\otimes u_2\otimes\cdots\otimes u_w.
\]
It follows that
\[
\bar z\cdot y_i=-(-1)^{|z|}\ov{r_i\otimes u_2\otimes\cdots\otimes u_w}.
\]
And finally if $v_j$ is one of the minimal generators of $\ext_R(k,R)$ and $ z$ is an homogeneous element in $F$ we get
\[
(v_j\otimes\bar z)\cdot y_i=(-1)^{|z|}v_jy_i\otimes1-(-1)^{|z|}v_j\otimes\ov{r_i\otimes u_2\otimes\cdots\otimes u_w}.
\]

\section*{Acknowledgments}
The author thanks his advisors Luchezar Avramov and Srikanth Iyengar for their help in completing this project and Frank Moore for reading an early version of this paper.

\end{document}